\title{Cohomology of bimultiplicative local systems on unipotent groups}
\author{Prashant Arote and Tanmay Deshpande}
\date{}
\newcommand{\mylabel}[2]
    {\protected@write\@auxout{}{\string\newlabel{#1}{{#2}{\thepage}%
      {\@currentlabelname}{\@currentHref}{}}}}}%
\newcommand{\mylabel}[2]
    {\protected@write\@auxout{}{\string\newlabel{#1}{{#2}{\thepage}}}}}
\newtheorem{theorem}{Theorem}[section]
\newtheorem{lemma}[theorem]{Lemma}
\newtheorem{corollary}[theorem]{Corollary}
\theoremstyle{definition}
\newtheorem{definition}[theorem]{Definition}
\newtheorem{remark}[theorem]{Remark}
\newcommand{\End}{\operatorname{End}}
\newcommand{\Ext}{\operatorname{Ext}}
\newcommand{\Qp}{\mathbb{Q}_{p}}
\newcommand{\Qlcl}{\overline{\mathbb{Q}}_{\ell}}
\newcommand{\Zp}{\mathbb{Z}_{p}}
\newcommand{\Ga}{\mathbb{G}_{a}}
\newcommand{\Gap}{\mathbb{G}_{a,\perf}}
\newcommand{\cpu}{\mathfrak{cpu}_{k}^{\circ}}
\newcommand{\E}{\mathcal{E}}
\newcommand{\Hom}{\operatorname{Hom}}
\newcommand{\perf}{\operatorname{perf}}
\newcommand{\spec}{\operatorname{Spec}}
\newcommand{\CH}{\operatorname{CH}}
\renewcommand{\L}{\mathcal{L}}
\renewcommand{\l}{\ell}
\begin{document}

\maketitle
\begin{abstract}
Let $U_1, U_2$ be connected commutative unipotent algebraic groups defined over an algebraically closed field $k$ of characteristic $p>0$ and let $\L$ be a bimultiplicative $\Qlcl$-local system on $U_1\times U_2$. In this paper we will study the $\Qlcl$-cohomology  $H^*_c(U_1\times U_2,\L)$, which turns out to be supported in only one degree. We will construct a finite Heisenberg group $\Gamma$ which naturally acts on $H^*_c(U_1\times U_2,\L)$ as an irreducible representation.  We will give two explicit realizations of this cohomology and describe the relationship between these two realizations as a finite Fourier transform.
    
\end{abstract}
\section{Introduction} Let $k$ be an algebraically closed field of characteristic $p>0$ and let $\l\neq p$ be a prime number. All schemes and group schemes considered in this paper are assumed to be defined over $k$, unless stated otherwise. The goal of this paper is to describe the cohomology of bimultiplicative $\Qlcl$-local systems on products of connected commutative unipotent groups. Multiplicative and bimultiplicative local systems play an important role in the theory of character sheaves on algebraic groups.

Let $U$ be a connected unipotent group over $k$. Then a multiplicative $\Qlcl$-local system on $U$ is a geometric analogue of a multiplicative character and the set of multiplicative local systems on $U$ is parametrized by the Serre dual (see Section \ref{Duality}) $U^*$ which is a \emph{perfect} commutative unipotent group over $k$. If $\L$ is a multiplicative local system on $U$, then 
\begin{equation}\label{eq:cohofmultlocsys}
H^*_c(U,\L)=\begin{dcases*}
0 & if $\L\ncong \Qlcl$,\\
\Qlcl[-2\dim U](-\dim U) & if $\L=\Qlcl$,
\end{dcases*}
\end{equation} where $[\cdot]$ denotes a cohomological degree shift and $(\cdot)$ denotes a Tate twist. Throughout this paper, we will use the notion of Serre duality for unipotent groups, which is only well defined in the setting of perfect unipotent groups. Hence it will often be necessary to work in the set-up of perfect schemes and perfect groups schemes, which we recall briefly in Section \ref{Duality}. Also to define Serre duality, it will be more convenient to work with central extensions of unipotent groups by $\Qp/\Zp$ instead of multiplicative $\Qlcl$-local systems. By \cite{Mitya}, if we fix an injective character $\psi:\Qp/\Zp\hookrightarrow \Qlcl^\times$ then these two notions are equivalent for unipotent groups. 

Now let $U_{1},\ U_{2}$ be connected commutative unipotent groups and let $\mathcal{L}$ be a \emph{bimultiplicative $\Qlcl$-local system} (see Section \ref{Duality} for a precise definition) on $U_{1}\times U_{2}$. Roughly speaking, this means that $\L$ is a local system on $U_1\times U_2$ whose restriction to each $\{x\}\times U_2\cong U_2$ (resp. $U_1\times \{y\}\cong U_1$) is a multiplicative local system on $U_2$ (resp. $U_1$). If $U_{1}$ and $U_{2}$ are connected commutative  unipotent  groups, then bimultiplicative $\Qlcl$-local systems on $U_{1}\times U_{2}$  are equivalent to biextensions of $U_{1}\times U_{2}$ by $\Qp/\Zp$ and we will often work interchangeably between these two notions.  For definitions and more details see Section \ref{Duality}.

If $\L$ is a bimultiplicative local system on $U_1\times U_2$, let $K_1\leq U_1$ be the closed subgroup formed by all $x\in U_1$ such that the restriction of $\L$ to $\{x\}\times U_2$ is trivial. Similarly we can define the closed subgroup $K_2\leq U_2$.  The cohomology $H^*_c(U_1\times U_2,\L)$ can be computed in two ways as follows. Consider the cartesian square:
\[
\xymatrix{
  & U_{1}\times U_{2} \ar[dl]_{p_{1}} \ar[rd]^{p_{2}} \\
  U_{1}\ar[rd]_{q_{1}} && U_{2} \ar[dl]^{q_{2}} \\
  & \{pt\}
}
\]
In the derived category for the derived pushforward with compact supports we have canonical isomorphisms
$$(q_{1}\circ p_{1})_{!}(\mathcal{L})\cong H^{*}_{c}(U_{1}\times U_{2}, \mathcal{L}) \cong (q_{2}\circ p_{2})_{!}(\mathcal{L}).$$ These isomorphisms give us two realizations of the cohomology which are described in Section \ref{computation}. In particular, we will see that the cohomology is supported in only one degree and we get two different bases for the cohomology $H^*_c(U_1\times U_2,\L)$. These two bases are parametrized by the connected components of the subgroups $K_1$ and $K_2$ defined above. We will denote these two bases by $\{X^{*}_{b_{2}}: b_{2}\in \pi_{0}(K_{2})\}$ and $\{Y^{*}_{b_{1}}: b_{1}\in \pi_{0}(K_{1})\}$. In this paper, we describe the matrix which relates these two bases.

Sometimes we will need to consider bimultiplicative local systems $\L$ on $U_1\times U_2$ where $U_i$ are \emph{perfect} connected commutative unipotent groups. To define the two bases of $H^*_c(U_1\times U_2,\L)$ as above in this perfect setting, we will need to choose models\footnote{A model of a perfect connected unipotent group over $k$ is a connected unipotent group over $k$ whose perfectization is identified with the given perfect connected unipotent group.} of the perfect group schemes $U_1$ and $U_2$. If we choose different models, the two bases would get scaled by some integral powers of $p$ and hence the change of basis matrix would also get scaled by some integral power of $p$.

We will see that the bimultiplicative local system $\L$ on $U_1\times U_2$ has natural equivariant structures for the translation actions of $K_1$ and $K_2$ on $U_1\times U_2$, but that $\L$ is not equivariant for the action of $K_1\times K_2$, rather we can define a  Heisenberg group $G$ extending $K_1\times K_2$ such that $\L$ has a natural $G$-equivariant structure (see Section \ref{sec:biadditive}). Hence the cohomology group $H^*_c(U_1\times U_2,\L)$ comes equipped with a $G$-action and we will prove that $H^*_c(U_1\times U_2,\L)$ is irreducible as a representation of $G$.
The Heisenberg group $G$ above is defined in terms of a natural  biadditive pairing $K_1\times K_2\to \Qp/\Zp$ which gives rise to a pairing $B:\pi_0(K_1)\times \pi_0(K_2)\to \Qp/\Zp$ (see Section \ref{sec:biadditive}). We will prove that the change of basis matrix relating the two bases of $H^*_c(U_1\times U_2,\L)$ is given by the above  pairing (composed with our fixed inclusion $\psi:\Qp/\Zp\hookrightarrow \Qlcl^\times$) up to a scalar, in particular the pairing above must be a perfect pairing. We also determine this scalar factor, which is of the form $\pm{p^a}$ for some $a\in \mathbb{Z}$ depending upon the choice of models for $U_1,U_2$.

If $U_{1}$ and $U_{2}$ are connected commutative unipotent groups, then  biextensions of $U_{1}\times U_{2}$ by $\Qp/\Zp$ are equivalent to biextensions of their perfectizations, which are in turn equivalent to $\Hom(U_{1,\perf},{U_{2,\perf}}^{*})$
(also equivalent to $\Hom(U_{2,\perf},{U_{1,\perf}}^{*})$, see Lemma \ref{biext-Hom}).
Let $\mathcal{L}$ be a bimultiplicative local system on $U_{1}\times U_{2}$ and $f:U_{1,\perf}\rightarrow {U_{2,\perf}}^{*},\ f^{*}:U_{2,\perf}\rightarrow {U_{1,\perf}}^{*}$ be the corresponding homomorphisms. 
Then the subgroups $K_{1}$ and $K_{2}$ defined above are essentially same as the  kernels of the homomorphisms $f$ and $f^{*}$ respectively. More precisely, $K_1,K_2$ provide models for the kernels of $f,f^*$ respectively. In Section \ref{computation}, we will prove that $\mbox{dim}(U_{1})+\mbox{dim}(\ker(f^{*}))=\mbox{dim}(U_{2})+\mbox{dim}(\ker(f))$.
In this paper we will prove the following results:
\begin{theorem}\label{MAIN RESULT}
Let $U_{1}$ and $U_{2}$ be two connected commutative unipotent algebraic group schemes  over $k$ of dimension $d_{1},\ d_{2}$ respectively.  Let
 $\mathcal{L}$ be a bimultiplicative $\Qlcl$-local system on $U_{1}\times U_{2}$ with the corresponding homomorphisms $f:U_{1,\perf}\rightarrow {U_{2,\perf}}^{*},f^{*}:U_{2,\perf}\rightarrow {U_{1,\perf}}^{*}$. Let $k_1,k_2$ denote the dimensions of $\ker(f),\ker(f^*)$ respectively.  Let $d=d_1-k_1=d_2-k_2$ and let $D=d_1+k_2=d_2+k_1$. Then $H^i_c(U_1\times U_2,\L)=0$ for $i\neq 2D$ and $H^{2D}_c(U_1\times U_2,\L(D))$ has the two explicit bases $\{X^{*}_{b_{2}}: b_{2}\in \pi_{0}(K_{2})\}$ and $\{Y^{*}_{b_{1}}: b_{1}\in \pi_{0}(K_{1})\}$. The relationship between these two bases of $H_{c}^{2D}(U_{1}\times U_{2}, \mathcal{L}(D))$ is given by,
$$Y^{*}_{b_{1}}=\frac{(-1)^d}{p^{r}}\sum_{b_{2}\in \pi_{0}(\ker f^{*})}\psi(B(b_{1},b_{2}))X^{*}_{b_{2}}$$
$$X^{*}_{b_{2}}=\frac{(-1)^d}{p^{r'}}\sum_{b_{1}\in \pi_{0}(\ker f)}\psi(-B(b_{1},b_{2}))Y^{*}_{b_{1}}$$
where $\psi:\Qp/\Zp\hookrightarrow \Qlcl^\times$  is our fixed injective character, $B:\pi_0(K_1)\times \pi_0(K_2)\to \Qp/\Zp$ is the associated biadditive pairing and $r,r'$ are some integers that we describe later, such that $p^{r+r'}=|\pi_{0}(\ker f)|=|\pi_{0}(\ker f^{*})|$.
\end{theorem}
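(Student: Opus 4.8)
The proof naturally splits into three parts: compute $H^{*}_c(U_1\times U_2,\L)$ in the two ways to produce the bases, use the Heisenberg group action to pin down the change of basis matrix up to a scalar, and finally determine the scalar.

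\emph{Part 1: the two realizations.} I would compute $R\Gamma_c(U_1\times U_2,\L)=(q_1\circ p_1)_!\L$ by first pushing along $p_1$. Because $\L$ is bimultiplicative, $\L|_{\{x\}\times U_2}$ is a multiplicative local system on $U_2$, trivial exactly when $x\in K_1$; hence by \eqref{eq:cohofmultlocsys} and proper base change $Rp_{1!}\L$ is supported on $K_1$, where — using that $U_2$ is connected and that $\L$ is trivial on $K_1\times\{0\}$, so that $\L|_{K_1\times U_2}$ is a constant local system — it equals $\Qlcl[-2d_2](-d_2)$. Since each connected component of $K_1$ is a torsor under the connected commutative unipotent group $K_1^{\circ}$, hence isomorphic as a variety to $\mathbb{A}^{k_1}$ once a model is fixed, $R\Gamma_c(K_1,\Qlcl)\cong\Qlcl[-2k_1](-k_1)^{\oplus\pi_0(K_1)}$ with basis the fundamental classes of the components. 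This shows $H^i_c=0$ for $i\neq 2(k_1+d_2)=2D$, produces $\{Y^{*}_{b_1}:b_1\in\pi_0(K_1)\}$, and — by the symmetric computation along $p_2$ — the basis $\{X^{*}_{b_2}\}$ together with the equalities $d_1+k_2=d_2+k_1=D$ and $|\pi_0(K_1)|=|\pi_0(K_2)|$; the identifications $K_1\sim\ker f$, $K_2\sim\ker f^{*}$ are as in Lemma~\ref{biext-Hom}.

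\emph{Part 2: the matrix up to a scalar.} Here I would use the Heisenberg group $G$ and the pairing $B$ of Section~\ref{sec:biadditive}. The key point is that $\L$ carries a canonical $G$-equivariant structure (for the translation action of $K_1\times K_2$ twisted by the cocycle $B$), so $G$ acts on $H^{2D}_c(U_1\times U_2,\L(D))$, and I would compute this action in each of the two bases. In the $\{Y^{*}_{b_1}\}$-realization, translation by $c_1\in K_1$ carries the component $K_1^{b_1}$ isomorphically onto $K_1^{b_1+\bar c_1}$, hence — for a model of $U_1$ chosen compatibly with translations — sends $Y^{*}_{b_1}$ to $Y^{*}_{b_1+\bar c_1}$; translation by $c_2\in K_2$ acts along the fibres of $p_1$, trivially on the underlying cohomology of the fibre $U_2$, but introduces the scalar $\psi(-B(b_1,\bar c_2))$ through the $K_2$-equivariant structure of $\L$ over $K_1^{b_1}\times U_2$. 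Symmetrically, in the $\{X^{*}_{b_2}\}$-realization $c_2$ translates the index and $c_1$ acts by the scalar $\psi(B(\bar c_1,b_2))$. Writing $Y^{*}_{b_1}=\sum_{b_2}c_{b_1,b_2}X^{*}_{b_2}$ and imposing equivariance under $K_2$ forces $c_{b_1,b_2}=\psi(B(b_1,b_2))\,c_{b_1,0}$, and then equivariance under $K_1$ forces $c_{b_1,0}$ to be independent of $b_1$; call it $\lambda$. Thus $Y^{*}_{b_1}=\lambda\sum_{b_2}\psi(B(b_1,b_2))X^{*}_{b_2}$ for a single scalar $\lambda$, and likewise $X^{*}_{b_2}=\lambda'\sum_{b_1}\psi(-B(b_1,b_2))Y^{*}_{b_1}$. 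Since Part~1 shows these are genuinely bases, the matrix $(\psi(B(b_1,b_2)))_{b_1,b_2}$ is invertible, which proves $B$ is a perfect pairing (and hence $H^{2D}_c$ is an irreducible $G$-representation); substituting the two formulas into one another then gives $\lambda\lambda'=1/|\pi_0(\ker f)|$.

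\emph{Part 3: the scalar, and the main obstacle.} It remains to identify $\lambda=(-1)^d/p^{r}$ and $\lambda'=(-1)^d/p^{r'}$ individually. Specializing to $b_1=0$ reduces $\lambda$ to comparing the single class $Y^{*}_0$ with $\sum_{b_2}X^{*}_{b_2}$ — equivalently, the fundamental class of the stratum $K_1^{\circ}\times U_2$ with that of $U_1\times K_2$ inside $H^{2D}_c(U_1\times U_2,\L(D))$. The sign is $(-1)^d$: this can be extracted by reducing to the non-degenerate case ($\ker f=\ker f^{*}=0$), where both bases are singletons and $Y^{*}_0$, $X^{*}_0$ differ exactly by the Fourier--Deligne sign $(-1)^d$ — verified by d\'evissage to the case $U_1=U_2=\Ga$, or via the compatibility of the two orientations of the $d$-dimensional ``non-degenerate part''. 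The power of $p$ is the model-dependent contribution: the identifications of the components of $K_1$ and $K_2$ with affine spaces used to normalize $Y^{*}_{b_1}$ and $X^{*}_{b_2}$ are canonical only up to purely inseparable automorphisms of $\mathbb{A}^{k_i}$, which act on top compactly supported cohomology by a power of $p$; carefully tracking these through the two iterated pushforwards yields $r$ and $r'$, and comparing with $\lambda\lambda'=1/|\pi_0(\ker f)|$ from Part~2 forces $p^{r+r'}=|\pi_0(\ker f)|=|\pi_0(\ker f^{*})|$. I expect this last point — the bookkeeping of fundamental-class normalizations, orientation signs and Tate twists through the two computations, reconciled against the choice of models — to be the principal technical difficulty; once the Heisenberg structure of Section~\ref{sec:biadditive} is available the remainder is essentially formal.
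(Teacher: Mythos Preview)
Your Part~1 matches the paper's Section~\ref{computation} exactly. Part~2 is correct and close in spirit to what the paper does, though the paper carries it out concretely: rather than arguing abstractly with equivariance, it realizes $H^{2d}(U_1\times U_2,\L^\vee(d))$ inside $H^{2d}(E,\Qlcl(d))$ via the cycle class map on the biextension $E$, writes down explicit $d$-cycles $X^{b_2}_a,Y^{b_1}_a\subset E$, and computes the $G$-action on those cycles directly. The reduction ``matrix is determined up to one scalar by equivariance'' is then the step where the paper applies the $G$-action to pass from the relation at $b_1=0$ (or $b_2=0$) to the general relation.

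The genuine divergence --- and the place where your sketch is thin --- is Part~3. The paper does \emph{not} extract the scalar by comparing orientation conventions or tracking purely inseparable normalizations through the iterated pushforwards. Instead it pulls back from the \emph{universal} biextension: having fixed a dual pair $(U_2,U'_2,\mathcal{E}_{U_2})$, one has a cartesian square with $E\to\tau^*(\mathcal{E}_{U_2})$ over $f'\times\mathrm{Id}:U_1\times U_2\to U'_2\times U_2$, and the single relation $X^{U_2}_0=m(U_2,U'_2,\mathcal{E}_{U_2})\,Y^{U_2}_0$ in the universal case pulls back (via Lemma~\ref{Pullback}) to the relation $X_0=\dfrac{\nu(f,U_1,U'_2)}{m(U_2,U'_2,\mathcal{E}_{U_2})}\sum_{b_1}Y_{b_1}$ in $\CH^d(E)$, the factor $\nu(f,U_1,U'_2)$ being the geometric multiplicity of the (possibly non-reduced) fibre. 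This gives a closed formula $m(U_1,U_2,\L)=m(U_2,U'_2,\mathcal{E}_{U_2})/\nu(f,U_1,U'_2)$ for your $\lambda^{-1}$. The shape $(-1)^d p^r$ then comes from a multiplicativity result for the universal constant (Theorem~\ref{SES}: $m$ is multiplicative in short exact sequences of dual pairs), which reduces Corollary~\ref{universal} to the explicit $\Ga$ computation of Section~\ref{additive group}. Finally the general case is reduced to the isogeny case by descending along $U_i\to U_i/K_i^\circ$. Your d\'evissage intuition is right, but the mechanism that makes it work --- cycle classes on $E$, pullback from the universal biextension, and the multiplicativity lemma --- is the substantive content you are missing; the claim that this step is ``essentially formal'' once the Heisenberg structure is in place understates it.
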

\begin{remark}\label{rk:constsrr'}
Let us denote the scalar factors $(-1)^dp^r, (-1)^dp^{r'}$ above by $m(U_{1},U_{2},\L),\ m(U_{2},U_{1},\tau^{*}(\L))$ respectively. We will describe these constants in Section \ref{sec:proof} and Remark \ref{rk:constants}.
\end{remark}
\begin{remark}
We can also formulate the above result for bimultiplicative local systems on \emph{perfect} connected unipotent groups $U_1,U_2$. However, the constants $\ m(U_{1},U_{2},\L),m(U_{2},U_{1},\tau^{*}(\L))$ and the bases $\{X^{*}_{b_{2}}: b_{2}\in \pi_{0}(K_{2})\}$, $\{Y^{*}_{b_{1}}: b_{1}\in \pi_{0}(K_{1})\}$ will depend on the choice of models for  $U_{1},U_{2}$. However, we see that the product  $ m(U_{1},U_{2},\L)\cdot m(U_{2},U_{1},\tau^{*}(\L))$ does not depend on the choice of models.
\end{remark}

\noindent As a corollary of the above result, we deduce the following which is also proved in (\cite[Prop. A.19]{Mitya}):
\begin{corollary}\label{nondegenerate} 
The pairing $B$ is non-degenerate.
\end{corollary}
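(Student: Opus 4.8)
The plan is to derive Corollary \ref{nondegenerate} as a purely formal consequence of Theorem \ref{MAIN RESULT}, using the single observation that a change-of-basis matrix is invertible. Set $n := |\pi_0(K_1)|$ and form the matrix $M := \bigl(\psi(B(b_1,b_2))\bigr)_{b_1\in\pi_0(K_1),\,b_2\in\pi_0(K_2)}$. By the theorem, $\{X^*_{b_2}\}_{b_2\in\pi_0(K_2)}$ and $\{Y^*_{b_1}\}_{b_1\in\pi_0(K_1)}$ are two bases of the single finite-dimensional $\Qlcl$-vector space $H^{2D}_c(U_1\times U_2,\L(D))$, so $|\pi_0(K_1)| = |\pi_0(K_2)| = n$ (this equality is also recorded in the theorem via $p^{r+r'} = |\pi_0(\ker f)| = |\pi_0(\ker f^*)|$), and hence $M$ is a square $n\times n$ matrix. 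The first displayed formula of the theorem says exactly that $\frac{(-1)^d}{p^r}M$ is the transition matrix expressing $\{Y^*_{b_1}\}$ in terms of $\{X^*_{b_2}\}$; since $(-1)^d/p^r$ is a unit in $\Qlcl$, the matrix $M$ itself is invertible.

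I would then argue by contradiction. Suppose $B$ is degenerate in its first variable, so there is $b_1^{\circ}\in\pi_0(K_1)\setminus\{0\}$ with $B(b_1^{\circ},b_2)=0$ for all $b_2\in\pi_0(K_2)$; then $\psi(B(b_1^{\circ},b_2)) = 1 = \psi(B(0,b_2))$ for every $b_2$, so $M$ has two equal rows and $\det M = 0$, contradicting invertibility. The same reasoning applied to $M^{\mathrm t}$ — or directly to the second displayed formula of the theorem, whose coefficient matrix is $\bigl(\psi(-B(b_1,b_2))\bigr)$ — rules out degeneracy in the second variable. Hence $B$ is non-degenerate. To see that $B$ is in fact a perfect pairing, note that non-degeneracy in the first variable means the homomorphism $\pi_0(K_1)\to\Hom(\pi_0(K_2),\Qp/\Zp)$, $b_1\mapsto B(b_1,-)$, is injective; as both sides are finite of the same order $n$, it is an isomorphism, and symmetrically for the other slot.

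I expect no substantive obstacle here: all the real work — that $H^i_c(U_1\times U_2,\L)$ is concentrated in degree $2D$, the construction of the two bases $\{X^*_{b_2}\}$ and $\{Y^*_{b_1}\}$, and the identification of the transition coefficients with values of $\psi\circ B$ — is already contained in Theorem \ref{MAIN RESULT}. The only points deserving a word of care are that the scalar $(-1)^d/p^r$ multiplying $M$ is invertible in $\Qlcl$ (so invertibility passes from the genuine transition matrix to $M$) and that $M$ is square, which is precisely the equality $|\pi_0(K_1)| = |\pi_0(K_2)|$ furnished by the theorem.
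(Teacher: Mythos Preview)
Your argument is correct and is essentially the same as the paper's: the paper simply observes that if $B(b_1,\cdot)\equiv 0$ then the formula of Theorem~\ref{MAIN RESULT} yields $Y^*_{b_1}=Y^*_{0}$, forcing $b_1=0$ since the $Y^*_{b_1}$ are distinct basis vectors, and symmetrically for the other slot. Your matrix phrasing (two equal rows $\Rightarrow$ $\det M=0$) is an equivalent repackaging of this observation.
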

\noindent Using the non-degeneracy of $B$ we will prove the following:
\begin{corollary}\label{irreducible}
The cohomology  $H_{c}^{2D}(U_{1}\times U_{2}, \mathcal{L}(D))$ is an irreducible representation of the finite Heisenberg group $\Gamma=\pi_{0}(G)$.
\end{corollary}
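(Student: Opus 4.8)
The plan is to deduce irreducibility from the standard representation theory of finite Heisenberg groups, using the non-degeneracy of $B$ established in Corollary \ref{nondegenerate}. First I would recall the structure of $\Gamma = \pi_0(G)$: by construction $G$ is a central extension of $K_1 \times K_2$ by $\Qp/\Zp$ whose commutator pairing is the biadditive pairing $K_1 \times K_2 \to \Qp/\Zp$, so $\Gamma = \pi_0(G)$ is a central extension of $\pi_0(K_1) \times \pi_0(K_2)$ by a finite cyclic subgroup $C \leq \Qp/\Zp$ (the image of the pairing), with commutator pairing $B: \pi_0(K_1) \times \pi_0(K_2) \to C \subseteq \Qp/\Zp$. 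The composite $\psi|_C$ is then a faithful character of $C$, and the key input is that $B$ is non-degenerate as a pairing of finite abelian groups, which forces $|\pi_0(K_1)| = |\pi_0(K_2)|$ and identifies each of $\pi_0(K_1), \pi_0(K_2)$ with the Pontryagin dual of the other via $B$ and $\psi$.

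Next I would invoke (or quickly reprove) the classical fact that such a finite Heisenberg group $\Gamma$ has, for each faithful central character $\chi$ of its center, a unique irreducible representation with that central character, of dimension $\sqrt{[\Gamma : Z(\Gamma)]}$ — or more precisely that the representation obtained by inducing $\chi$ (extended to a maximal abelian subgroup such as the preimage of $\pi_0(K_1) \times \{0\}$) is irreducible. The cleanest way to run this in our situation: let $A \leq \Gamma$ be the preimage of $\pi_0(K_1)$, an abelian subgroup since $B$ restricted to $\pi_0(K_1) \times \{0\}$ vanishes; extend $\psi|_C$ to a character $\tilde\psi$ of $A$; then $\mathrm{Ind}_A^\Gamma \tilde\psi$ has dimension $|\pi_0(K_2)|$, and a Mackey-irreducibility computation shows it is irreducible precisely because non-degeneracy of $B$ means no nontrivial element of $\pi_0(K_2)$ (equivalently no coset representative) stabilizes $\tilde\psi$. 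Since $H^{2D}_c(U_1 \times U_2, \L(D))$ is a $\Gamma$-representation on which $C$ acts by the central character $\psi|_C$ (because $\L$ carries its $G$-equivariant structure with $\Qp/\Zp$ acting through $\psi$) and has dimension $|\pi_0(K_1)| = |\pi_0(K_2)|$ by Theorem \ref{MAIN RESULT}, it must be isomorphic to this unique irreducible, hence irreducible.

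To make this rigorous I would need to check two things carefully. First, that the center of $\Gamma$ is exactly $C$ (not larger): this again follows from non-degeneracy of $B$, since an element $(b_1, b_2, c)$ is central iff $B(b_1, -) \equiv 0$ and $B(-, b_2) \equiv 0$, iff $b_1 = 0 = b_2$. Second, that the $\Gamma$-action on the cohomology restricted to $C$ is genuinely via $\psi$ and not through some other character — this should come from the compatibility of the $G$-equivariant structure on $\L$ with the chosen $\psi: \Qp/\Zp \hookrightarrow \Qlcl^\times$, as set up in Section \ref{sec:biadditive}. The main obstacle, I expect, is bookkeeping rather than conceptual: correctly matching the Heisenberg group conventions (which pairing, $B$ or $-B$, which central character) between the geometric construction of $G$ acting on $\L$ and the abstract Stone–von Neumann statement, and confirming that the dimension count from Theorem \ref{MAIN RESULT} together with the faithful central character pins down the representation uniquely. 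Once the center and central character are identified, irreducibility is essentially immediate from Stone–von Neumann for finite Heisenberg groups.
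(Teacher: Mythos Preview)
Your proposal is correct and follows essentially the same route as the paper: the paper first proves (as its Corollary \ref{generic character}) that $\psi$ is a generic central character of $\pi_0(G)$---using the non-degeneracy of $B$ exactly as you outline---and then invokes Stone--von Neumann to conclude there is a unique irreducible with that central character, of dimension $|\pi_0(K_1)|$; since the cohomology has this dimension (Consequence \ref{dim eq}) and central character, it is that irreducible. One small bookkeeping point: in the paper's conventions the center of $\pi_0(G)$ is the whole group $A$ (the stabilizer of the connected component of the biextension), not merely the image of $B$; but since $\psi$ is injective on $A$ this does not affect your argument.
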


Bimultiplicative local systems play an important role in the theory of character sheaves on unipotent groups (see \cite{BD,BD:foundations}).  The structure of the modular categories that arise in this theory is determined by certain skew-symmetric bimultiplicative local systems and their attached metric groups (see \cite{Datta,Desh}). The motivation for the questions studied in this paper also comes from the theory of character sheaves on unipotent groups. In the case of skew-symmetric and symmetric bimultiplicative local systems, we get a more explicit result, see Corollary \ref{symmetric}.

\section*{Acknowledgments}
We are grateful to Vladimir Drinfeld and Takeshi Saito for very helpful correspondence.

\section{Serre duality, biextensions and bimultiplicative local systems}\label{Duality}
In this section, we briefly recall the notion of Serre duality for commutative unipotent groups. For more details we refer to \cite{Mitya}, \cite{Datta}.
 A scheme $\mathbf{X}$ in characteristic $p$ (i.e., $p$ annihilates the
 structure sheaf $\mathcal{O}_{X}$ of $\mathbf{X}$) is called \textit{perfect}
 if the morphism $\mathcal{O}_{X} \rightarrow\mathcal{O}_{X}$
given by $f \mapsto f^{p}$ on the
local sections of $\mathcal{O}_{X}$ is an isomorphism of sheaves.

Let $\mathfrak{Perf}_{k}$ be the full subcategory of $\mathfrak{Sch}_{k}$ formed  by perfect schemes over $k$. One knows that there is a right adjoint to the forgetful functor from $\mathfrak{Perf}_{k}\ \mbox{to}\ \mathfrak{Sch}_{k}$, which is known as the \textit{perfectization} functor and denoted by $X\mapsto   X_{\perf}$ which we briefly recall below (see \cite{Greenberg} for more). 
There is a canonical adjunction morphism $X_{\perf}\to X$, for any scheme $X$ over $k$.
\begin{definition} Let $X$ be a scheme over $k$ then the underlying topological space of $X_{\perf}$ is same as that of $X$ and the structure sheaf of $X_{\perf}$ is the inductive limit of $\mathcal{O}_{X}\xrightarrow{\Phi^{*}}\mathcal{O}_{X}\xrightarrow{\Phi^{*}}\cdots$. 
Equivalently, $X_{\perf}=\varprojlim(X\xrightarrow{\Phi}X\xrightarrow{\Phi}\cdots)$ where $\Phi:X\rightarrow X$ is the absolute Frobenius morphism defined by identity on the underlying topological space and $f\mapsto f^{p}$ on local sections of the structure sheaf $\mathcal{O}_{X}$.

Let $X$ be a perfect scheme over $k$ then a model for $X$ is a scheme $X_{1}$ over $k$ such that $X_{1,\perf}\cong X$ as a perfect scheme over $k$.
A perfect scheme of finite type over $k$ is defined to be a perfect scheme over $k$ which is isomorphic to the perfectization of a scheme of finite type over $k$.
\end{definition}
Let $X$ be a scheme over $k$. Then we will write $X^{(p)}$ for the scheme over $k$ obtained as the fiber product of the structure morphism $X\rightarrow \spec k$ and the absolute Frobenius  morphism $\Phi_{k}:\spec k\rightarrow\spec k$.
By universal property of the fiber product, the morphism $\Phi:X\rightarrow X$ and the structure morphism $X\rightarrow \spec k$ induces a morphism $\Phi_{X/k}:X\rightarrow X^{(p)}$ of schemes over $k$; it is called the \textit{relative Frobenius morphism}.
The relative Frobenius morphism $X\rightarrow X^{(p)}$ induces an isomorphism between perfectizations.

Let $X, Y$ be  schemes of finite type over $k$ and let $f:X_{\perf}\rightarrow Y_{\perf}$ be any morphism between their perfectizations.
 Let $U$ be an affine open neighbourhood in $Y$, then $f^{*}(\mathcal{O}_{Y}(U)\subseteq\Phi^{n}(\mathcal{O}_{X}(f^{-1}(U)))$ for some $n$.
 As $X$ and $Y$ are schemes of finite type, we can choose a sufficiently large $N$ and an affine cover of $Y$ such that $f^{*}(\mathcal{O}_{Y}(U)\subseteq\Phi^{N}(\mathcal{O}_{X}(f^{-1}(U)))$  for all $U$ in an affine cover of $Y$.

\begin{remark}\label{model}
Let $X,\ Y$ be schemes of finite type over $k$ and let $f:X_{\perf}\rightarrow Y_{\perf}$ be any morphism then $f=\Phi^{-N}_{Y/k}\circ f'_{\perf}$ for some $f':X\rightarrow Y^{(p^{N})}$ (follows from above discussion).
\end{remark}

\begin{definition}{(cf. \cite[A.8]{Mitya})} A perfect unipotent group over $k$ is a perfect group scheme over $k$ which is isomorphic to the perfectization of a unipotent algebraic group over $k$.
\end{definition}

The two basic examples of perfect unipotent groups over $k$ are the discrete
group $\mathbb{Z}/p\mathbb{Z}$ and the perfectization $\mathbb{G}_{a, \perf}$ of the additive group $\Ga$. 
If $k$ is algebraically closed then every connected perfect unipotent group over $k$ has a finite filtration by closed normal subgroups with successive subquotients isomorphic to $\mathbb{G}_{a, \perf}$.
\begin{remark}\label{model:subgroup}
Let $G$ be a smooth group scheme over $k$ and let $G_{\perf}$ be its perfectization. Let $H$ be a perfect closed subgroup scheme of $G_{\perf}$.
Let $H_{1}\leq G$ be the underlying reduced and hence smooth subgroup scheme corresponding to $H$, then  $H_{1,\perf}\cong H$ as a subgroup of $G_{\perf}$.
Therefore if we fix a smooth model for perfect group scheme then we get   a canonical  model for any closed perfect subgroup scheme.
\end{remark}

Let $\cpu$ denote the category of perfect  connected commutative  unipotent algebraic groups over $k$. Let $U$ be a commutative connected unipotent group scheme in $\cpu$.  We consider the additive group $\Qp/\Zp$ as a discrete (infinite) commutative group scheme over $k$. Consider the following contravariant functor from the category $\cpu$ to  the category of abelian groups:
\begin{equation}\label{eq:centext}
S\mapsto \Ext(U\times S, \mathbb{Q}_{p}/ \mathbb{Z}_{p})
\end{equation}
where $\Ext(A,B)$ denotes (for a group scheme $A$ and a commutative group scheme $B$) the group  of isomorphism classes of central extensions of $A$ by $B$:
$$0\rightarrow B\rightarrow C\rightarrow A\rightarrow 0.$$
The functor $\cpu\to \mathfrak{Ab}^{\operatorname{op}}$ defined above is representable by a group scheme
$U^{*}$ in $\cpu$ (cf.\cite[Theorem A.9]{Mitya}). The group scheme $U^{*}$ is known as the \textit{Serre dual} of $U$. Serre duality has the following properties (cf. \cite[1.1]{Datta}):
\begin{itemize}
\item[(i)] $U^{*}$ is a perfect connected commutative unipotent group scheme isogenous to $U$.
\item[(ii)] There is a canonical isomorphism  $U\cong U^{**}$.
\item[(iii)\label{dualses}] If $ 0 \rightarrow U'\rightarrow U \rightarrow U'' \rightarrow 0$ is an exact sequence of perfect connected commutative unipotent group schemes, then so is 
$ 0 \rightarrow {U''}^*\rightarrow U^{*} \rightarrow {U'}^* \rightarrow 0$.
\end{itemize}
In other words Serre duality defines an exact anti-involution of the exact category $\cpu$.

\begin{remark}
More generally, if $U$ is any (not necessarily commutative or perfect) connected unipotent group, the functor defined by (\ref{eq:centext}) is representable by an object $U^*\in \cpu$. We refer to \cite[Appendix A.10]{Mitya} for details.
\end{remark}
\begin{remark}(cf.\cite[Remark F.1]{BD})\label{ext}
The group $\Ext(U\times S, \Qp/\Zp)$ equals $\Ext(U\times S,p^{-n}\Zp/\Zp)$ for any $n\in \mathbb{N}$ such
that $U$ is annihilated by $p^{n}$.
\end{remark}
\begin{remark}
Throughout this paper, we will fix an injective character $\Qp/\Zp\hookrightarrow \Qlcl^\times$. Then by \cite[Lemma 7.3]{Mitya} there is an equivalence between the notions of central extensions of a connected unipotent group $U$ by $\Qp/\Zp$ and multiplicative $\Qlcl$-local systems on $U$. Hence we may also consider the Serre dual $U^*$ as the moduli space of multiplicative local systems on the connected unipotent group $U$.
\end{remark}

\definition{(Biextensions, cf. \cite[A.6]{Mitya})}\label{biextension}
Let $G_1,G_2$ be group schemes over $k$ and let $A$ be a commutative group scheme over $k$. A biextension of $G_{1}\times G_{2}$ by $A$ is a scheme $E$ over $k$, equipped with an action of $A$ and a morphism $\pi : E \rightarrow G_{1}\times G_{2}$ which makes $E$ an $A$-torsor over $G_{1}\times G_{2}$, together with the following additional structures:
\begin{enumerate}
\item[(a)] Choices of sections of $\pi$ along $\{1\}\times G_{2}$ and $G_{1} \times \{1\}$, by means of which the
“slices” $\pi^{-1}({1}\times G_{2})$ and $\pi^{-1}(G_{1}\times \{1\})$ will be identified with $A\times G_{2}$ and
$G_{1}\times A$, respectively, where $1$ denotes the unit in $G_{1}$ or $G_{2}$.
\item[(b)] A morphism $\bullet_{1} : E\times_{G_{2}} E\rightarrow E$ which makes $E$ a group scheme over $G_{2}$ and
makes $\pi$ a central extension of $G_{1}\times G_{2}$, viewed as a group scheme over $G_{2}$, by
$A\times G_{2}$, in a way compatible with the identification $A\times G_{2}\cong\pi^{-1}({1}\times G_{2})$.
\item[(c)] A morphism $\bullet_{2} : E\times_{G_{1}} E\rightarrow E$ which makes $E$ a group scheme over $G_{1}$ and
makes $\pi$ a central extension of $G_{1}\times G_{2}$, viewed as a group scheme over $G_{1}$, by
$G_{1}\times A$, in a way compatible with the identification $G_{1}\times A\cong\pi^{-1}(G_{1}\times A)$.
\end{enumerate}
These data are required to satisfy the following compatibility condition:\\ if $T$ is
any $k$-scheme and $e_{11}, e_{12}, e_{21}, e_{22}\in E(T)=
\Hom_{k}(T,E)$, then
\begin{equation}\label{1}
    (e_{11}\bullet_{2} e_{12})\bullet_{1}(e_{21}\bullet_{2}e_{22})=(e_{11}\bullet_{1} e_{21})\bullet_{2}(e_{12}\bullet_{1}e_{22})
\end{equation}
whenever both sides of this equality are defined, i.e., whenever
$$\pi(e_{11})=(g_{1}, g_{2}), \ \pi(e_{12})=(g_{1}, g'_{2}),\
 \pi(e_{21})=(g'_{1}, g_{2}),\ \pi(e_{22})=(g'_{1}, g'_{2})\mbox{ for }g_{1}, g'_{1}\in G_{1}(T), g_{2}, g'_2\in G_{2}(T).$$

We will be primarily interested in biextensions of connected commutative unipotent groups $(U_1,U_2)$ by the discrete commutative group scheme $\Qp/\Zp$ or by a finite subgroup scheme $A\leq \Qp/\Zp$. 

\begin{remark}\label{finite biextension}
Let $E$ be a biextension of $U_{1}\times U_{2}$ by $\Qp/\Zp$ and $E_{0}$ be a connected component of $E$.
Let $A$ denote the stabilizer of the component $E_{0}$ in $\Qp/\Zp$ then $E_{0}$ is a biextension of $U_{1}\times U_{2}$ by $A$.
Therefore corresponding to every biextension of $U_{1}\times U_{2}$ by $\Qp/\Zp$, we get a biextension of $U_{1}\times U_{2}$ by a finite group $A\ (\leq \Qp/\Zp))$. 
In this paper we will work over a  biextension of $U_{1}\times U_{2}$ by a finite subgroup $A$ of $\Qp/\Zp$ corresponding to a biextension of $U_{1}\times U_{2}$ by $\Qp/\Zp$.
\end{remark}

For connected commutative unipotent groups, the notion of biextensions by $\Qp/\Zp$ is equivalent to the notion of bimultiplicative local systems which are our main objects of interest in this paper:
\definition{(Bimultiplicative local system, cf.  \cite[Appendix A.7.]{Mitya})}\label{bimultiplicative}
Let $G_{1}, G_{2}$ be group schemes over $k$, and let $\mu_{1} : G_{1}\times  G_{1} \rightarrow G_{1}  \mbox{ and }   \mu_{2} : G_{2} \times G_{2}\rightarrow G_{2}$
be the multiplication morphisms. Let
$$pr_{13}, pr_{23} : G_{1} \times G_{1} \times G_{2} \rightarrow G_{1} \times G_{2}
\mbox{ and }
pr_{12}, pr_{13} : G_{1} \times G_{2} \times G_{2}\rightarrow G_{1} \times G_{2}$$
be the projections. A bimultiplicative $\Qlcl$-local system on $G_{1} \times G_{2}$ is a $\Qlcl$-local system $\L$ on $G_1\times G_2$ such that
$(\mu_{1} \times id_{G_{2}})^{*}(\mathcal{L}) \cong pr_{13}^{*}(\mathcal{L})\otimes pr_{23}^{*}(\mathcal{L})$ as local systems on $ G_{1} \times G_{1} \times G_{2}$
and
$(id_{G_{1}} \times \mu_{2})^{*}(\mathcal{L})\cong
pr_{12}^{*}(\mathcal{L})\otimes pr_{13}^{*}(\mathcal{L})$ as local systems on $ G_{1} \times G_{2} \times G_{2}$, along with a trivialization $\L_{(1,1)}\cong\Qlcl$.

\remark\label{rk:bimultbiext} 
Recall that we have fixed an injection of the group $\mathbb{Q}_{p}/\mathbb{Z}_{p}$ into $\Qlcl^{\times}$. Let $U_1,U_2$ be connected commutative unipotent groups. In this case,  $\Qlcl$-bimultiplicative local systems on  $U_{1}\times U_{2}$ are in one to one  correspondence with   biextensions of $U_{1}\times U_{2}$ by $\Qp/\Zp$ (cf. \cite[Lemma 7.3 and Lemma A.16]{Mitya}).

For connected commutative unipotent groups $U_1,U_2$, let $\operatorname{Biext}(U_{1}, U_{2})$ denote the group of isomorphism classes of biextensions of $U_{1}\times U_{2}$ by the discrete group scheme $\mathbb{Q}_{p}/\mathbb{Z}_{p}$.
\lemma(cf. \cite[Lemma A.17]{Mitya})\label{biext-Hom} Let ${U_{1}}, {U_{2}} \in \cpu$ be perfect connected commutative unipotent groups.
There are canonical isomorphisms between abelian groups:
$$\Hom({U_{1}},{{U}_{2}}^{*})\cong \operatorname{Biext}({U_{1}}, {U_{2}})\cong \Hom({U}_{2},{U}_{1}^{*})$$
where the composite map is obtained from duality.
\begin{remark} (\cite[Lemma A.17]{Mitya}.) For every  $U \in \cpu$ there exist a biextension $\mathcal{E}_{U}$ of $U\times U^{*}$ corresponding to the homomorphism $Id:U\rightarrow U$, which is universal with respect to  following property:
Let $E$ be a biextension of $U\times U_{1}$ and let $f:U\rightarrow U_{1}^{*}$, $f^{*}:U_{1}\rightarrow U^{*}$ be the corresponding morphisms, then $E$ fits in the following pullback squares:
$$\begin{tikzcd}
E\arrow[d]\arrow[r]
& \mathcal{E}_{U_{1}^{*}}\arrow[d]\\
U\times U_{1}\arrow[r,"f\times Id"]
& U_{1}^{*}\times U_{1}
\end{tikzcd} 
\mbox{ and }
\begin{tikzcd}
E\arrow[d]\arrow[r]
& \mathcal{E}_{U}\arrow[d]\\
U\times U_{1}\arrow[r,"Id\times f^{*}"]
& U\times U^{*}.
\end{tikzcd}$$
\end{remark}

\definition(cf. \cite{Saibi}, Definition 1.5.1(ii).) A dual pair of (usual, i.e. non-perfect) unipotent groups is a triple $(U,U',\mathcal{E}_{U})$, where $U$ and $U'$ are (usual) connected commutative unipotent algebraic groups over $k$, and $\mathcal{E}_{U}$ is a biextension of $U\times U'$ by $\Qp/\Zp$ with the property that the induced biextension  
$(\mathcal{E}_{U})_{\perf}$ of $(U_{\perf} \times U'_{\perf})$ by $\Qp/\Zp$ is the universal family of central extensions of $U_{\perf}$ by
$\Qp/\Zp$ parameterized by $U'_{\perf}$ as in the remark above, and in particular  it
identifies $U'_{\perf}$ with 
$(U_{\perf})^{*}$ the Serre dual of $U_{\perf}$ . 

\remark\label{perfectization} (cf. \cite[Remark F.4(2)]{BD})
If $U$ is a connected commutative unipotent algebraic group over $k$, then there always exists a dual pair $(U,U',\mathcal{E}_U)$.
 Indeed, one can take $U'$ to be any commutative unipotent  group over
$k$ with $(U')_{\perf} \cong (U_{\perf})^{*}$, which exists 
because $(U_{\perf})^{*}$ is perfect  connected commutative  unipotent  algebraic group over $k$. Then we use a general
fact \cite[expos\'e VIII]{SGA4}:  if $X$ is a scheme over $k$ and $A$ is a discrete abelian
group, the natural functor from the groupoid of $A$-torsors over $X$ to the groupoid of
$A$-torsors over $X_{\perf}$ is an equivalence of categories.

\begin{remark} 
If $(U,U',\mathcal{E}_{U})$ is a dual pair then by definition we have an identification $\mathcal{E}_{U_{\perf}}\cong (\mathcal{E}_{U})_{\perf}$, where $\mathcal{E}_{U_{\perf}}$ is the universal family of central extensions of $U_{\perf}$ by $\Qp/\Zp$.
\end{remark}

\noindent $\bullet$ Now onwards $(U,U', \mathcal{E}_{U})$ will denote a dual pair, in particular we have ${U_{\perf}}^{*}\cong U'_{\perf}$.

In the remainder of this section we briefly recall the notion of Chow groups, Gysin maps and Poincare duality for etale cohomology. 
These concept are used in Section \ref{computation} to get the basis for cohomology.
For more detail we refer to \cite[Chap.VI Sec.5,9,10,11]{Milne}.

 \begin{remark}\label{Chow group}
 Let $X$ be a smooth scheme over k. An elementary $r$-cycle on $X$ is a closed integral subscheme $Z\subseteq X$ of codimension $r$.
 The group of algebraic $r$-cycles $C^{r}(X)$ is a free abelian group on the set of elementary cycles.
 The Chow group $\CH^{r}(X)$ is defined as the quotient $C^{r}(X)/\mbox{rational equivalence}$. 
 We write $C^{*}(X)=\bigoplus_{r\geq 0} C^{r}(X)$ and $\CH^{*}(X)=\bigoplus_{r\geq 0} \CH^{r}(X)$.
 \end{remark}
 \begin{remark}\label{Gysin Map}
 Let $X$ be a smooth scheme over k and $Z$ be a non-singular subscheme of $X$ such that each connected component of $Z$ has codimension $r$ in $X$ then there are canonical isomorphisms $H^{m-2r}(Z, \Qlcl(-r))\rightarrow H^{m}_{Z}(Z,\Qlcl)$  for all $m\geq 0$. 
 Using these isomorphisms and a long exact sequence of cohomology for a pair $(X,X\setminus Z)$, we get a homomorphism  $i_{*}:H^{0}(Z,\Qlcl)\rightarrow H^{2r}(X,\Qlcl(r))$.
 This homomorphism is called as \emph{Gysin} map.
 
 Let $X$ be a smooth scheme over $k$ the cycle class map $cl_{x}:C^{*}(X)\rightarrow H^{*}(X,\Qlcl)$ is a homomorphism of graded groups. For smooth cycles it is defined as follows:
 if $Z$ is a smooth closed integral subscheme of codimension $r$ then $cl_{X}(Z)=i_{*}(1_{Z})$ where $i_{*}:H^{0}(Z,\Qlcl)\rightarrow H^{2r}(X,\Qlcl(r))$ is the Gysin map and $1_{Z}\in H^{0}(Z,\Qlcl)$.
 The class map induces a ring homomorphism $[.]:\bigoplus\CH^{r}(X)\rightarrow\bigoplus H^{2r}(X,\Qlcl(r))$.
 \end{remark}
 \begin{remark}\label{Poincare Duality}
Let $X$ be a smooth scheme of dimension $d$ over $k$ and $\L$ be a $\Qlcl$-local system on $X$  with $\L^{\vee}$ being the dual local system. By the Poincare duality theorem, for each $r\in \mathbb{Z}$  we have a canonical perfect pairing $H^{r}_{c}(X,\L)\times H^{2d-r}(X,\L^{\vee}(d))\rightarrow H^{2d}_{c}(X,\Qlcl(d))\cong\Qlcl$.
 Using this one can identify $H^{2d-r}(X,\L^{\vee}(d))$ with the dual of $H^{r}_{c}(X,\L)$.
 \end{remark}
 \begin{lemma}\label{Pullback}
Let $f: X \rightarrow Y$ be a flat morphism of relative dimension $n$, and $\alpha$ a $r$-cycle on $Y$ which is rationally equivalent to zero. Then $f^{*}(\alpha)=[f^{-1}(\alpha)]$  is $r+n$-cycle on $X$ rationally equivalent to zero.
\end{lemma}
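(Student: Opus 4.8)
\textbf{Proof proposal for Lemma \ref{Pullback}.} The plan is to reduce the statement to the definition of rational equivalence and to the behaviour of flat pullback on cycles, using that both notions are well understood for cycles associated to subvarieties. First I would recall that an $r$-cycle $\alpha$ on $Y$ rationally equivalent to zero is, by definition, a finite sum $\alpha = \sum_j (\operatorname{div}_{W_j} g_j)$, where each $W_j \subseteq Y$ is an $(r+1)$-dimensional closed integral subscheme and $g_j$ is a nonzero rational function on $W_j$; here $\operatorname{div}_{W_j}(g_j)$ denotes the associated principal divisor, viewed as an $r$-cycle on $Y$ via the inclusion $W_j \hookrightarrow Y$. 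Since flat pullback is additive, it suffices to treat a single term $\operatorname{div}_W(g)$ for a closed integral $W \subseteq Y$ of dimension $r+1$ and $g \in k(W)^\times$.

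Next I would analyze the scheme-theoretic preimage $W' = f^{-1}(W) = X \times_Y W$. Because $f$ is flat of relative dimension $n$, the restricted morphism $f_W : W' \to W$ is again flat of relative dimension $n$, so every irreducible component of $W'$ has dimension $r+1+n$, and $f^*[W] = [W']$ as $(r+1+n)$-cycles by definition of flat pullback. The rational function $g$ on $W$ pulls back to a rational function $g' := f_W^*(g)$ on $W'$ (a priori on each component of $W'$ dominating $W$; on components not dominating $W$ one argues separately, but in the situation at hand — $W$ integral and $f_W$ flat hence equidimensional and without embedded components — every component of $W'$ dominates $W$). The key compatibility is then the identity
$$f^*\bigl(\operatorname{div}_W(g)\bigr) = \operatorname{div}_{W'}\bigl(f_W^*(g)\bigr),$$
which expresses that taking the divisor of a rational function commutes with flat pullback. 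Granting this, the right-hand side is by definition an $(r+n)$-cycle on $X$ that is rationally equivalent to zero, and summing over $j$ finishes the proof. Note that $f^{-1}(\alpha)$ and $f^*(\alpha)$ agree as cycles precisely because flat pullback is defined via the scheme-theoretic preimage with its natural multiplicities, and in the flat case these multiplicities are what $[f^{-1}(\alpha)]$ records; this justifies the notation $f^*(\alpha) = [f^{-1}(\alpha)]$ in the statement.

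The main obstacle is establishing the displayed commutation $f^*(\operatorname{div}_W g) = \operatorname{div}_{W'}(f_W^* g)$, i.e.\ the compatibility of the order/valuation function with flat base change. This is a local computation comparing the length of $\mathcal{O}_{W,\eta}/(g)$ at a codimension-one point $\eta$ of $W$ with the corresponding lengths at the points of $W'$ lying over $\eta$, and it rests on the fact that for a flat local homomorphism the length multiplies by the length of the closed fibre; this is exactly the content of the projection/compatibility formula for flat pullback of cycles (see Fulton, \emph{Intersection Theory}, Ch.\ 1, or the reference \cite[Chap.\ VI]{Milne} cited above). Once this input is in hand, the rest of the argument is purely formal bookkeeping: additivity of $f^*$, the dimension count from relative dimension $n$, and unwinding the definition of rational equivalence. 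I would therefore structure the written proof as: (1) reduce to a single principal divisor by additivity; (2) identify $f^{-1}(W)$ and its dimension via flatness; (3) invoke the commutation of $\operatorname{div}$ with flat pullback; (4) conclude.
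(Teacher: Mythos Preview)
Your proposal is correct and is essentially the standard argument for this fact; indeed the paper does not give its own proof at all but simply refers the reader to \cite[Theorem 1.7]{Intersection} (Fulton's \emph{Intersection Theory}), whose proof is exactly the reduction you outline. So there is nothing to compare: you have supplied a sketch of the cited proof rather than a different one.
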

\begin{proof}
See \cite[Theorem $1.7$]{Intersection} .
\end{proof}
 \begin{remark}(cf.\cite[Prop. 9.2]{Milne})\label{9.2}
 Let $\pi:Y\rightarrow X$ be a map of varieties and $Z$ an algebraic cycle on $X$. 
 If for every prime cycle $Z'$ occurring in $Z$, $Y\times_{X}Z'$ is integral then $\pi^{*}(Z)$ is defined and $cl_{Y}(\pi^{*}(Z))=\pi^{*}cl_{X}(Z)$.
 \end{remark}
 \begin{remark}(cf.\cite[Prop.9.3]{Milne})\label{9.3}
 Let $i:Z\hookrightarrow X$ be a closed immersion of smooth varieties. For any $W\in C^{r}(Z),i_{*}(cl_{Z}(W))=cl_{X}(W)$, where $i_{*}$ is the Gysin map $H^{2r}(Z,\Qlcl(r))\rightarrow H^{2(r+c)}(X,\Qlcl(r+c))$ and $Z$ is a $c$-cycle.
 
 \end{remark}

\section{Biadditive pairing and Heisenberg group attached to biextensions}\label{sec:biadditive}

Let $\tilde{U}_{1},\ \tilde{U}_{2}$ be two perfect connected commutative unipotent groups.
Let $E$ be a biextension of $\tilde{U}_{1}\times \tilde{U}_{2}$ by $\Qp/\Zp$ and let $f : \tilde{U}_{1} \rightarrow \tilde{U}_{2}^{*}, f^{*} : \tilde{U}_{2}\rightarrow \tilde{U}_{1}^{*}$ be the corresponding homomorphisms. Let $\tilde{K}_1\subseteq \tilde{U}_1,\ \tilde{K}_2\subseteq \tilde{U}_2$ be the kernels of $f, f^*$ respectively.
Then the restriction $E\vert _{\tilde{K}_1 \times \tilde{U}_{2}}$ (resp. $E\vert _{\tilde{U}_{1} \times \tilde{K}_2}$) is a trivial biextension of $\tilde{K}_1 \times \tilde{U}_{2}$ (resp. $ \tilde{U}_{1}\times \tilde{K}_2$).
Since $\tilde{U}_{1}, \tilde{U}_{2}$ are connected, we get unique trivializations (cf. \cite[ A.13]{Mitya}):

$$\sigma_{1} : (\tilde{K}_1 \times \tilde{U}_{2})\times \mathbb{Q}_{p}/\mathbb{Z}_{p}\xrightarrow{\sim} E\vert _{\tilde{K}_1 \times \tilde{U}_{2}}$$  
$$\sigma_{2} :(\tilde{U}_{1} \times \tilde{K}_2)\times \mathbb{Q}_{p}/\mathbb{Z}_{p}\xrightarrow{\sim} E\vert _{\tilde{U}_{1} \times \tilde{K}_2}  $$
Both $\sigma_{1}\ \mbox{and}\ \sigma_{2}$ induce trivializations of $E\vert_{ \tilde{K}_1 \times \tilde{K}_2}$ by restriction, hence the composition $\sigma_{1}\circ\sigma_{2}^{-1}$ gives an automorphism of the biextension of 
 $\tilde{K}_1 \times \tilde{K}_2$.
Therefore we get a  biadditive map (of group schmes):
$$B: \tilde{K}_1 \times \tilde{K}_2 \rightarrow \mathbb{Q}_{p}/\mathbb{Z}_{p}$$
which is trivial on $\tilde{K}_1^{\circ} \times \tilde{K}_2^{\circ}$. Hence we get a biadditive pairing 
$$B : \pi_{0}(\tilde{K}_1)\times\pi_{0}(\tilde{K}_2)\rightarrow \mathbb{Q}_{p}/\mathbb{Z}_{p}$$
by passing to the quotient.
The groups of connected components $\pi_{0}(\tilde{K}_1)$ and $\pi_{0}(\tilde{K}_2)$ are finite abelian $p$-groups.
We will see later  that the pairing $B$ is non-degenerate.

By Remark \ref{finite biextension}, by replacing $E$ with a connected component, we consider $E$ as an extension of $\tilde{U}_{1}\times \tilde{U}_{2}$ by a finite subgroup $A\subseteq\Qp/\Zp$. We get the biadditive pairing 
$$B : \pi_{0}(\tilde{K}_1)\times\pi_{0}(\tilde{K}_2)\rightarrow A\subseteq\mathbb{Q}_{p}/\mathbb{Z}_{p}.$$
 Let $\tilde{G}$ be the perfect group scheme whose underlying perfect scheme is $\tilde{K}_{1}\times \tilde{K}_{2}\times A $, and where the group operation is defined by using the biadditive pairing $B$ as follows:
$$(b_{1}, b_{2}, a)\star (b'_{1}, b'_{2}, a')= \left( b_{ 1}+b'_{1},\ b_{2}+b'_{2},\ a+a'+B(b_{1}, b'_{2})\right).$$
Under this operation $\tilde{G}$ becomes a perfect Heisenberg group scheme with center $\tilde{K}_1^{\circ} \times \tilde{K}_2^{\circ}\times A$. We have a central extension $$0\to A\to \tilde{G}\to \tilde{K}_1\times \tilde{K}_2\to 0.$$  The Heisenberg group $\tilde{G}$ naturally acts (through its quotient $\tilde{K}_1\times \tilde{K}_2$) on $\tilde{U}_1\times \tilde{U}_2$. Let us now define a natural action of $\tilde{G}$ on $E$ such that the projection $\pi:E\to \tilde{U}_1\times \tilde{U}_2$ becomes $\tilde{G}$-equivariant. 
There are natural translation  actions of  $\tilde{K}_{1}$ and $\tilde{K}_{2}$  on $E$ through the  trivializations $\sigma_{1}$ and $\sigma_{2}$ as follows:  For $x \in E$, let $\pi(x)=(g_1,g_2)$ where $\pi:E\to \tilde{U}_1\times \tilde{U}_2$. Then
 $$k_{1}*x= \sigma_{1}(k_{1}, g_{2})\bullet _{1} x\mbox{ for }k_1\in \tilde{K}_{1}$$
 $$k_{2}*x= \sigma_{2}(g_{1}, k_{2})\bullet _{2} x\mbox{ for }k_2\in \tilde{K}_{2}$$
The two actions above do not commute, since
$$k_{1}*(k_{2}*x)= k_{1}*\left(\sigma_{2}(g_{1}, k_{2})\bullet_{2} x\right) =
\sigma_{1}(k_{1}, k_{2} + g_{2})\bullet_{1}\left(\sigma_{2}(g_{1}, k_{2})\bullet_{2} x\right) $$
$$=\left(\sigma_{1}(k_{1}, k_{2})\bullet_{2}\sigma_{1}(k_{1}, g_{2})\right)\bullet_{1}\left( \sigma_{2}(g_{1}, k_{2})\bullet_{2}x\right)$$
$$=\left(\sigma_{1}(k_{1}, k_{2})\bullet_{1}\sigma_{2}(g_{1}, k_{2})\right)\bullet_{2}\left( \sigma_{1}(k_{1}, g_{2})\bullet_{1}x\right)$$
 $$=\left( B(k_{1},k_{2}) \sigma_{2}(k_{1},k_{2})\bullet_{1}\sigma_{2}(g_{1},k_{2})\right)\bullet_{2}(k_{1}*x)$$
 $$= B(k_{1},k_{2})\sigma_{2}(k_{1}+g_{1}, g_{2})\bullet_{2}(k_{1}*x)$$
 $$=B(k_{1},k_{2})\left(k_{2}*(k_{1}*x)\right).$$
Therefore, we get  $k_{1}*(k_{2}*x)= B(k_{1},k_{2})\left( k_{2}*(k_{1}*x)\right)$.  Using this relation, we in fact get an action of  $\tilde{G}$ on $E$ as follows: for $x\in E$ and $(k_{1}, k_{2}, a) \in \tilde{G}$,
$$(k_{1}, k_{2}, a)*x = k_{1}*\left(k_{2}*\left(\left(a+B(k_{1}, k_{2})\right)\cdot x\right)\right).$$
It is clear that the projection $\pi:E\to \tilde{U}_1\times\tilde{U}_2$ is $\tilde{G}$-equivariant with this action.
The action of $\tilde{G}$ on $E$ gives rise to a representation of $\tilde{G}$ on  $H^{*}_{c}(E,\Qlcl)$. In fact, we get an action of the finite Heisenberg group $\pi_0(\tilde{G})=\pi_0(\tilde{K}_{1})\times \pi_0(\tilde{K}_{2})\times A$ on the cohomology $H^*_c(E,\Qlcl)$.

Now let $U_{1}, U_{2}$ be two commutative connected unipotent algebraic groups. Let $E$ be a biextension of $U_{1}\times U_{2}$ by a finite group $A\subseteq \Qp/\Zp$ and $f:U_{1,\perf}\rightarrow {U_{2,\perf}}^{*}$, $f^{*}:U_{2,\perf}\rightarrow {U_{1,\perf}}^{*}$ be the corresponding homomorphisms. We can choose models $U'_2$ for ${U_{2,\perf}}^{*}$ and $U'_1$ for ${U_{1,\perf}}^{*}$ such that $f,f^*$ are perfectizations of algebraic maps $f:U_1\to U'_2$ and $f^*:U_2\to U'_1$. These models for the duals give rise to dual pairs $(U_{1},U'_{1},\mathcal{E}_{U_{1}}),\ (U_{2},U'_{2},\mathcal{E}_{U_{2}})$ for $U_1$ and $U_2$. 

Let $K_1\subseteq U_1$ (resp. $K_2\subseteq U_2$) be the reduced subgroup scheme associated with $\ker f\subseteq U_1$ (resp. $\ker f^*\subseteq U_2$). As before, we have the biadditive pairing $B:\pi_0(K_1)\times \pi_0(K_2)\to A$ as well as the corresponding Heisenberg group $G:=K_1\times K_2\times A$. As before, we have a natural action of $G$ on $E$, which induces an action of the finite Heisenberg group $\pi_0(G)=\pi_0(K_1)\times\pi_0(K_2)\times A$ on $H^*_c(E,\Qlcl)$. Moreover, the natural projection $\pi:E\to U_1\times U_2$ is $G$-equivariant.
\begin{remark}\label{decomposition}
Since $\pi:E\to U_1\times U_2$ is a biextension by $A$, we have 
\begin{equation}
\pi_!\Qlcl=\bigoplus\limits_{\chi\in Irr(A)}\L_\chi,
\end{equation}
where for an irreducible character $\chi:A\to \Qlcl^\times$, $\L_\chi$ denotes the corresponding bimultiplicative local system on $U_1\times U_2$. Since $\pi$ is $G$-equivariant, each $\L_\chi$ is a $G$-equivariant local system on $U_1\times U_2$ and hence we have an action of $G$ on $H^*_c(U_1\times U_2,\L_\chi)$ such that $A\leq G$ acts by the character $\chi$. Hence we have a decomposition
$$H_{c}^{*}(E,\Qlcl)\cong \bigoplus_{\chi \in Irr(A)}H_{c}^{*}(U_{1}\times U_{2}, \L_{\chi})$$
as $G$-representations and the direct summand $H_{c}^{*}(U_{1}\times U_{2}, \L_{\chi})\subseteq H_{c}^{*}(E, \Qlcl)$ can be identified as the $\chi$-isotypic component for the $A$-action.
\end{remark}

 \section{Computation of the cohomology}\label{computation}
Let $U_{1},\ U_{2}$ be two commutative connected unipotent algebraic groups over $k$ of dimension $d_{1},d_{2}$ respectively.
Let $\L$ be a bimultiplicative $\Qlcl$-local system $U_{1}\times U_{2}$. By Remark \ref{rk:bimultbiext} this corresponds to a biextension of $U_{1}\times U_{2}$ by $\Qp/\Zp$.  
We will denote the connected component of above biextension by $\pi:E\to U_1\times U_2$. Then $E$ is a biextension of $U_{1}\times U_{2}$ by a finite group $A\leq \Qp/\Zp$ ($A$ is the subgroup which preserves the connected component $E$) and $\pi^*\L$ is a trivial local system.
We can choose models $U'_{1}$ for ${U_{1,\perf}}^{*}$ and $U'_{2}$ for ${U_{2,\perf}}^{*}$ in such a way that we have algebraic maps $f:U_{1}\rightarrow U'_{2}$, $f^{*}:U_{2}\rightarrow U'_{1}$ whose perfectizations  $f_{\perf}:U_{1,\perf}\rightarrow U^{*}_{2,\perf}$, $f_{\perf}^{*}:U_{2,\perf}\rightarrow U^{*}_{1,\perf}$ correspond to the bimultiplicative local system $\L$ on $U_{1}\times U_{2}$. 
The models for duals give rise to dual pairs $(U_{1},U'_{1},\E_{U_{1}})$, $(U_{2},U'_{2},\E_{U_{2}})$ for $U_{1}$ and $U_{2}$. We will fix these dual pairs for further calculations in this section. As before, we define $K_1\leq U_1, K_2\leq U_2$ to be the reduced subgroup schemes associated with $\ker f,\ \ker f^*$ respectively.

Consider the following cartesian square,  
\[
\xymatrix{
  &U_{1}\times U_{2} \ar[dl]_{p_{1}} \ar[rd]^{p_{2}} \\
  U_{1}\ar[rd]_{q_{1}} && U_{2} \ar[dl]^{q_{2}} \\
  & \{pt\}
}
\]
In the derived category, we have   $(q_{1}\circ p_{1})_{!}(\mathcal{L})\cong H^{*}_{c}(U_{1}\times U_{2}, \mathcal{L}) \cong (q_{2}\circ p_{2})_{!}(\mathcal{L})$. Let us do some computation: $H^{*}_{c}(U_{1}\times U_{2}, \mathcal{L})\cong (q_{1}\circ p_{1})_{!}(\mathcal{L})\cong (q_{1})_{!}\left( {p_{1}}_{!}(\mathcal{L})\right) \cong
 H_{c}^{*}(U_{1}, (p_{1})_{!}(\mathcal{L}))$
 but for any $x\in U_{1}$, the stalk of $(p_{1})_{!}(\mathcal{L})$ at $x$ is $(p_{1})_{!}(\mathcal{L})_x\cong H_{c}^{*}(x\times U_{2}, \mathcal{L}\vert_{x\times U_{2}})\cong H_{c}^{*}(U_{2}, \L_{x})$ (where $\L_{x}=\L\vert_{x\times U_{2}}$),
but we know that
\begin{align*}
    H_{c}^{*}(U_{2},\L_{x})
    &=\begin{cases}
			{\Qlcl}(-d_{2})[-2d_{2}], & \mbox{if}\ \L_{x}= \Qlcl \\
            0, & \mbox{otherwise}
		 \end{cases}\\
	& =\begin{cases}
\Qlcl(-d_{2})[-2d_{2}], & \mbox{if}\ x\in \ker f\\
0, & \mbox{otherwise.}
 \end{cases}
\end{align*}
Moreover, we have a canonical trivialization $\L|_{K_1\times U_2}\cong {\Qlcl}_{K_1\times U_2}$ (see beginning of Section \ref{sec:biadditive}). Hence $(p_1)_!\L={\Qlcl}_{K_1}(-d_2)[-2d_2]$.
Let dimension of $\ker f$ be $k_{1}$ and dimension of $\ker f^{*} $ be $k_{2}$. Now each connected component of $K_i$ is isomorphic to affine space $\mathbb{A}^{k_i}$. Hence   using the above calculation  we get that, $$H^*_c(U_1\times U_2,\L)\cong H_{c}^{*}(U_{1}, (p_{1})_{!}\mathcal{L})\cong\ \bigoplus_{b \in\pi_{0}(K_1)}\Qlcl(-d_{2}-k_{1})[-2d_{2}-2k_{1}].$$
Similarly, $$H^*_c(U_1\times U_2,\L)\cong H_{c}^{*}(U_{2}, (p_{2})_{!}\mathcal{L})\cong\ \bigoplus_{b \in\pi_{0}(K_2)}\Qlcl(-d_{1}-k_{2})[-2d_{1}-2k_{2}].$$
Here we already see that the cohomology is only supported in degree $2D:=2(d_2+k_1)$. Moreover, we get two realizations of the cohomology  $H_{c}^{2D}(U_{1}\times U_{2},\L(D))$ which determine two bases of this cohomology space. Now our aim is to determine the relation between these bases. We also observe that:
\consequence\label{dim eq}
\begin{enumerate}
\item  $D=d_{1}+k_{2}=d_{2}+k_{1}$. Also set $d:=d_2-k_2=d_1-k_1$.
\item $\vert\pi_{0}(K_1)\vert=\vert\pi_{0}(\ker f)\vert=\vert\pi_{0}(\ker f^{*})\vert=\vert\pi_{0}(K_2)\vert$.
\end{enumerate}
Note that we have fixed an injective character $\Qp/\Zp\hookrightarrow\Qlcl^\times$ throughout and hence we obtain the character $\psi:A\subseteq\Qp/\Zp\hookrightarrow\Qlcl^\times$. By construction, our original local system $\L$ equals $\L_{\psi}$ (see Remark \ref{decomposition}) and hence $H^*_c(U_1\times U_2,\L)$ can be identified as the $\psi$-isotypic component for the action of $A$ on $H^*_c(E,\Qlcl)$. 

Let us now describe the two above bases of $H^*_c(U_1\times U_2,\L(D))$ considered as the $\psi$-isotypic component of $H^{*}_{c}(E, \Qlcl(D))$ and use it to describe the relationship between the two bases. As the cohomology $H^*_c(U_1\times U_2,\L(D))$ is non-zero only in  degree $2D$, therefore it is enough to consider $H^{2D}_{c}(E, \Qlcl(D))$.
By Poincare duality (Remark \ref{Poincare Duality})
$$H^{2D}_{c}(E,\Qlcl(D))\cong
H^{2d}(E,\Qlcl(d))^{*} \mbox{ and  }H^{2D}_{c}(U_1\times U_2,\L(D))\cong
H^{2d}(U_1\times U_2,\L^\vee(d))^{*}.$$
Also by Remark \ref{Gysin Map}, there is a homomorphism $\CH^d(E)\rightarrow H^{2d}(E, \Qlcl (d))$.
We will describe the cohomology $H^{2d}(U_1\times U_2,\L^\vee(d))$ using the above cycle class map in terms of $\CH^d(E)$.

We have the following commutative diagram:
\begin{center}
\begin{tikzcd}
 E\vert_{\ker f\times U_{2}} \arrow[hookrightarrow]{r}\arrow[d]
& E \arrow[d, " \pi "]\arrow[hookleftarrow]{r}
&   E\vert_{U_{1}\times \ker f^{*}} \arrow[d]\\
\ker f\times U_{2} \arrow[hookrightarrow]{r} \arrow[u, bend left, "\sigma_{1}"]
& U_{1}\times U_{2}\arrow[hookleftarrow]{r} \
& U_{1}\times \ker f^{*}\arrow[l]\arrow[u, bend right, swap, "\sigma_{2}"]
\end{tikzcd}
\end{center}
where $\sigma_{1}\ \mbox{and}\ \sigma_{2}$ are trivializations and $\ker f\times U_{2}$, $U_{1}\times \ker f^{*}$ are defined by fibre product:
\begin{center}
\begin{tikzcd}
\ker f\times U_{2}\arrow[r]\arrow[d]
&{0}\times U_{2}\arrow[hookrightarrow]{d}\\
U_{1}\times U_{2}\arrow[r, "f\times Id"]
& U'_{2}\times U_{2}
\end{tikzcd}\ \ \ \ \ \ \ \ \ \ \ \ \ \
\begin{tikzcd}
U_{1}\times \ker f^{*}\arrow[r]\arrow[d]
&U_{1}\times {0}\arrow[hookrightarrow]{d}\\
U_{1}\times U_{2}\arrow[r, "Id\times f^{*}"]
& U_{1}\times U'_{1}
\end{tikzcd}
\end{center}
As before, let $ K_{1},\ K_{2}$ denote the reduced scheme associated with $\ker f$, $\ker f^{*}$ respectively. 
Let us fix some notation for further discussion. For any $b_{i}\in K_{i}$, let $ K_{i}^{b_{i}}$ denote the connected component of  $ K_{i}$ containing $b_{i}$.   For any $b_1\in K_1,b_2\in K_2$ and $a\in A$ define $X_{0}^{b_{2}}= \sigma_{2}(U_{1}\times K_{2}^{b_{2}}),\ Y_{0}^{b_{1}}=\sigma_{1}(K_{1}^{b_{1}}\times U_{2})$, $X_{a}^{b_{2}}= a\cdot X_{0}^{b_{2}},\ Y_{a}^{b_{1}}=a\cdot Y_{0}^{b_{1}}$. Hence we have the smooth subschemes $X^{b_2}_a,Y^{b_1}_a\subset E$ of codimension $d$.
We have an action of $G:=K_{1}\times K_{2}\times A$ on $E$, which induces an action of $G$ on Chow group.
Suppose $k_{1},b_{1}\in K_{1},\ k_{2},b_{2}\in K_{2}$ and $a,a_{1}\in A$ then we  see that
$$k_{1}\cdot X_{a}^{b_{2}}=X_{a+B(k_{1}, b_{2})}^{b_{2}} \ \ \ \ \ \ \ \ \ \  k_{1}\cdot Y_{a}^{b_{1}}=Y_{a}^{k_{1}+b_{1}}$$
 $$k_{2}\cdot X_{a}^{b_{2}}=X_{a}^{k_{2}+b_{2}}\ \ \ \ \  \ \ \ \ \ k_{2}\cdot Y_{a}^{b_{1}}=Y_{a-B(b_{1}, k_{2})}^{b_{1}}$$
 $$a_{1}\cdot X_{a}^{b_{2}}=X_{a+a_{1}}^{b_{2}}\ \ \ \ \ \ \ \ \ \ a_{1}\cdot Y_{a}^{b_{1}}=Y_{a+a_{1}}^{b_{2}}.$$
Define $X_{b_{2}}=\sum_{a\in A}\psi(a)X_{a}^{b_{2}}\  \mbox{and}\ Y_{b_{1}}=\sum_{a\in A}\psi(a)Y_{a}^{b_{1}}$ as elements of $C^d(E)\otimes \Qlcl$. 
\begin{remark}
By a slight abuse of notation, we will often use the same symbol to denote a $d$-cycle as well as its image in $H^{2d}(E,\Qlcl(d))$ under the cycle class map.
\end{remark}
By the description of the action of $G$ on these $d$-cycles we see that for $a_1\in A, b_i\in K_i$, we have $a_1\cdot X_{b_2}=\psi^{-1}(a_1)X_{b_2}$ and $a_1\cdot Y_{b_1}=\psi^{-1}(a_1)Y_{b_1}$ and that the two sets  $\mathcal{B}_{X}=\{X_{b_{2}}: b_{2}\in \pi_{0}(K_{2})\}$ and 
 $\mathcal{B}_{Y}=\{Y_{b_{1}}: b_{1}\in \pi_{0}(K_{1})\}$ form two bases of $H^{2d}(U_{1}\times U_{2},\L^{\vee}_{\psi}(d))$ considered as the $\psi^{-1}$-isotypic direct summand in $H^{2d}(E,\Qlcl(d))$.
By Remark \ref{Poincare Duality}, $H_{c}^{2D}(U_{1}\times U_{2},\L_{\psi}(D))$ is isomorphic to dual of $H^{2d}(U_{1}\times U_{2},\L_{\psi}^{\vee}(d))$.
 We will denote these dual bases of $H_{c}^{2D}(U_{1}\times U_{2},\L_{\psi}(D))$ by 
 $\mathcal{B}^{*}_{X}=\{X^{*}_{b_{2}}: b_{2}\in \pi_{0}(K_{2})\}$ and 
 $\mathcal{B}^{*}_{Y}=\{Y^{*}_{b_{1}}: b_{1}\in \pi_{0}(K_{1})\}$. These are precisely the two bases of $H_{c}^{2D}(U_{1}\times U_{2},\L_{\psi}(D))$ described earlier. To describe the relationship between these two bases, it will be sufficient to describe the relationship between the two dual bases $\mathcal{B}_X$ and $\mathcal{B}_Y$ of $H^{2d}(U_1\times U_2,\L^\vee_{\psi}(d))$.

\section {The special case $\Ga\times\Ga$}\label{additive group}
In this section we give an explicit description of the relationship between the two bases for biextensions of $\Ga\times\Ga$. Let $\Gap$ be the perfectization of the additive group $\Ga$.
In particular, $\Gap$ is the spectrum of the ring, 
$R = k[x, x^{1/p}, x^{1/{p^2}}, \dots ]$.
Let us consider the dual pair $(\Ga,\Ga,\E_{\Ga})$ with $\E_{\Ga}=\spec\left(k(x,y,z)/(xy-z^{p}+z)\right)$. This gives us an identification of the dual ${\Gap }^{*}$ with $\Gap$ (cf. \cite[1.2]{Datta}), i.e. using the above dual pair we can consider $\Ga$ as a model for $\Gap^*$. Now let $E$ be any biextension of $\Ga \times \Ga$ and $f : \Gap\rightarrow\Gap\cong \Gap^*$ the corresponding homomorphism. We have the following Lemma (cf. \cite[Lemma 1, Prop. 11]{Datta}):
\begin{lemma}
\begin{enumerate}
\item $\End(\Gap)= k\{\Phi, \Phi^{-1}\}$ where $\Phi$ is the Frobenius automorphism (which sends $x$ to $x^{p}$). 
The algebra structure is given by $\Phi a=a^{p}\Phi$ for all $a\in k$ and $\Phi^{*}=\Phi^{-1},\ c^{*}=c$ for all $c\in k$.
In particular, if  $f\in \End(\Gap)$, $f=\sum_{k=m}^{M}a_{k}\Phi^{k}$ then $f^{*}=\sum_{k=m}^{M}a^{p^{-k}}_{k}\Phi^{-k}=\sum_{k=-M}^{-m}{(a_{-k})}^{p^{k}}\Phi^{k}$. 
\item If $E$ is as above then $E\cong \mbox{Spec}\left(T[z]/(z^{p}-z-f(x)y)\right)$, where $T=R\otimes R$.
\item There exists a unique element $g(x,y)\in T$ such that $g(0,0)=0$ and $g(x,y)^{p}-g(x,y)=f(x)y-xf^{*}(y)$.  Furthermore, for $b_{1}\in\ker f$ and $b_{2}\in \ker f^{*}$, the pairing $B$ is given by the formula $B(a,b)=g(a,b)$.
\end{enumerate}
\end{lemma}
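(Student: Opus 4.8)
The plan is to reduce everything to explicit polynomial computations in the ring $T = R \otimes R$, using the concrete model $\E_{\Ga} = \spec\left(k[x,y,z]/(xy - z^p + z)\right)$ of the universal biextension. For part (1), I would first recall the classical description of $\End(\Gap)$: an endomorphism of $\Gap$ corresponds to an additive polynomial (a $p$-polynomial) up to applying negative powers of Frobenius, so after inverting $\Phi$ we get exactly the twisted Laurent polynomial ring $k\{\Phi,\Phi^{-1}\}$ with the commutation rule $\Phi a = a^p \Phi$ forced by $\Phi(ax) = a^p x^p$. The adjoint statement $\Phi^* = \Phi^{-1}$ should follow from unwinding the definition of the Serre dual pairing on $\E_{\Ga}$: transposing the biextension swaps the two copies of $\Gap$, and on the equation $xy - z^p + z$ this swap, combined with tracking how $z$ transforms, produces the inverse Frobenius. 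The formula for $f^*$ when $f = \sum a_k \Phi^k$ is then purely formal: apply $*$ termwise, use $(a\Phi^k)^* = \Phi^{-k} a^* = \Phi^{-k} a = a^{p^{-k}} \Phi^{-k}$ (moving the scalar past $\Phi^{-k}$ picks up the $p^{-k}$ power), and reindex.

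For part (2), the point is that $E$ is the pullback of $\E_{\Ga}$ along $f \times \mathrm{id} : \Ga \times \Ga \to \Ga \times \Ga$ (using the pullback square from Lemma \ref{biext-Hom}). Concretely, pulling back $xy - z^p + z$ along the substitution $x \mapsto f(x)$ gives the equation $z^p - z - f(x)y = 0$, so $E \cong \spec\left(T[z]/(z^p - z - f(x)y)\right)$; I would just check this pullback description carefully, since $f$ itself may only be defined on the perfectization and one has to be slightly careful about which model is being used (this matches the footnote discussion about models). For part (3), existence and uniqueness of $g$ amounts to solving the Artin–Schreier equation $g^p - g = f(x)y - x f^*(y)$ inside $T$; uniqueness with the normalization $g(0,0)=0$ is immediate since the difference of two solutions is a constant killed by $\wp(t) = t^p - t$, hence zero. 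For existence I would verify directly that $f(x)y - xf^*(y)$ lies in the image of $\wp$ on $T$: writing $f = \sum_k a_k \Phi^k$, each monomial $a_k x^{p^k} y - x (a_k)^{p^{-k}} y^{p^{-k}}$ should be visibly of the form $h^p - h$ for an explicit $h$ (roughly $h = a_k^{?} x^{p^{k-1}} y + \cdots$, summing a telescoping series), which is exactly the kind of identity that makes the additive/Artin–Schreier formalism work.

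The identification $B(b_1,b_2) = g(b_1,b_2)$ for $b_1 \in \ker f$, $b_2 \in \ker f^*$ is the heart of the matter and the step I expect to be the main obstacle, because it requires matching the abstract definition of $B$ (built from the two canonical trivializations $\sigma_1,\sigma_2$ over $\tilde K_1 \times \tilde U_2$ and $\tilde U_1 \times \tilde K_2$ in Section \ref{sec:biadditive}) with the explicit function $g$. The plan here: over $K_1 \times \Ga$ the biextension $E$ is trivial because $f(b_1) = 0$, and the equation becomes $z^p - z = 0$, so the trivialization $\sigma_1$ is given by the locally constant section $z = 0$ (on the appropriate component) — more precisely, $\sigma_1$ picks out the unique section through the identity. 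Similarly over $\Ga \times K_2$ one uses the rewritten equation: since $f(x)y - xf^*(y) = g^p - g$, we have $z^p - z - f(x)y = (z-g)^p - (z-g) - xf^*(y)$, so setting $z' = z - g$ exhibits $E|_{\Ga \times K_2}$ with trivial equation $z'^p - z' = 0$ (using $f^*(b_2) = 0$), and $\sigma_2$ is the section $z' = 0$, i.e. $z = g(x,y)$. Then $\sigma_1 \circ \sigma_2^{-1}$ over $K_1 \times K_2$ compares the section $z = 0$ with the section $z = g(x,y)$, and the discrepancy — which by definition is $B(b_1,b_2) \in A \subseteq \Qp/\Zp$ — is exactly $g(b_1,b_2) \in \frac{1}{p}\z/\z$ (thinking of $\wp^{-1}$ values modulo the prime field). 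I would need to be careful about signs and about the precise normalization of $\sigma_1,\sigma_2$ (the uniqueness in \cite[A.13]{Mitya} is what pins them down), but modulo that bookkeeping the computation is forced.
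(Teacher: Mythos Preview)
The paper does not actually prove this lemma: it is stated with the citation ``(cf.\ \cite[Lemma 1, Prop.\ 11]{Datta})'' and no argument is given in the paper itself. So there is no ``paper's own proof'' to compare against; the statement is imported wholesale from the reference.

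That said, your proposal is correct and is essentially the standard argument one would expect to find in the cited source. Your identification of the two trivializations is the key point and is right: $\sigma_1$ is the section $z=0$ over $K_1\times\Ga$ (where $f(x)=0$ makes the Artin--Schreier equation degenerate), while rewriting the equation via the substitution $z'=z-g(x,y)$ exhibits $E$ as the pullback along $\mathrm{id}\times f^*$ (since $(z')^p-z'=xf^*(y)$), so $\sigma_2$ is the section $z'=0$, i.e.\ $z=g(x,y)$, over $\Ga\times K_2$. The discrepancy on $K_1\times K_2$ is then exactly $g(b_1,b_2)\in\mathbb F_p\cong\frac{1}{p}\z/\z$. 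Your telescoping construction of $g$ is also correct and matches the explicit formula the paper records in the Remark immediately following the lemma. The only caveat is the one you already flag: sign conventions in how $B$ is defined from $\sigma_1\circ\sigma_2^{-1}$ versus $\sigma_2\circ\sigma_1^{-1}$, but this is bookkeeping rather than a gap.
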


\begin{remark}
One has an explicit formula for the element $g$ above. If $f=a\tau^{n}$, then one has:
$$ g(x,y)=
\begin{cases}
a^{p^{-1}}x^{p^{n-1}}y^{p^{-1}}+\dots +a^{p^{-n}}xy^{p^{-n}}\ \ \ \ \ \ \mbox{if}\  n>0\\
0\ \ \ \ \ \ \ \ \ \ \ \ \ \ \ \ \ \ \ \  \ \ \ \ \ \ \ \ \ \ \ \ \ \ \ \ \ \ \ \ \ \  \ \ \ \ \ \  \mbox{if}\ n=0 \\
-ax^{p^{n}}y-\dots - a^{p^{-n-1}}x^{p^{-1}}y^{p^{-n-1}}\ \ \ \ \ \mbox{if}\ n<0 
\end{cases}$$
For general $f$, one notes that $g$ varies additively with respect to $f$.
\end{remark}

We want to give an explicit  description of the relations between two bases of cohomology in the case of bimultiplicative local systems on $\Ga\times\Ga$. It will be convenient to choose models for the duals $\Gap^*$ (which may differ from the standard model mentioned above) such that the corresponding homomorphisms $f,f^*:\Gap\to\Gap^*$ are algebraic maps with respect to these models.

Let $\L$ be a bimultiplicative $\Qlcl$-local system on $\Gap\times\Gap$ such that the corresponding homomorphisms $f,\ f^{*}:\Gap\to\Gap^*\cong\Gap$ are given by $f=\sum_{k=m}^{M}a_{k}\Phi^{k}$ and $f^{*}=\sum_{k=m}^{M}a^{p^{-k}}_{k}\Phi^{-k}$ and are non-zero.
Now we choose a different model $\Ga'=\spec(k[x^{p^{m}}])$ for $\Gap^{*}$ then we have an algebraic map $\tilde{f}:\Ga\rightarrow\Ga'$, $\tilde{f}(x)=\sum_{k=0}^{M-m}a_{k+m}^{p^{-m}}x^{p^{k}}$ such that $\tilde{f}_{\perf}=f$ and one more model $\Ga''=\spec(k[x^{p^{-M}}])$  for $\Gap^{*}$ then we have an algebraic map $\tilde{f^{*}}(x)=\sum_{k=0}^{M-m}a^{p^{-(k+m)}}_{M-k}x^{p^{-k}}$ such that $\tilde{f^{*}}_{\perf}=f^{*}$.
Therefore we get two dual pairs $(\Ga,\Ga',\mathcal{E}_{\Ga'})$ and $(\Ga,\Ga'',\mathcal{E}_{\Ga''})$.

We have the following commutative diagrams:
\begin{center}
\begin{tikzcd}
E_{\perf}  \arrow[d]
\\ \Gap\times\Gap
\end{tikzcd}
\begin{tikzcd}
E \arrow[r, "F"] \arrow[d]
& \tau^{*}(\mathcal{E}_{\mathbb{G}'_{a}})\arrow[d]\\
\mathbb{G}_{a}\times\mathbb{G}_{a}\arrow[r, "\tilde{f}\times Id"]
& 
\mathbb{G}'_{a}\times\mathbb{G}_{a}
\end{tikzcd}\ \ \ \ 
\begin{tikzcd}
E \arrow[r,"F'"] \arrow[d]
& \mathcal{E}_{\mathbb{G}''_{a}}\arrow[d, ]\\
\mathbb{G}_{a}\times\mathbb{G}_{a}\arrow[r, "Id\times \tilde{f^{*}}"]
& 
\mathbb{G}_{a}\times\mathbb{G}_{a}''
\end{tikzcd}
\end{center}
where $E_{\perf}\cong \spec\left(T[z]/(z^{p}-z-f(x)y)\right)$, $E_{\perf}\cong \spec\left(T[z]/(z^{p}-z-xf^{*}(y))\right)$, $\tau^{*}(\mathcal{E}_{\Ga'})=\{(x,y,z)\in \Ga'\times\Ga\times\Ga: x^{p^{m}}y=z^{p}-z\}$ and $\mathcal{E}_{\Ga''}=\{(x,y,z)\in \Ga\times\Ga''\times\Ga: xy^{p^{-M}}=z^{p}-z\}$.

Let $X'_{0}=\{(x,y,z)\in\tau^{*}(\mathcal{E}_{\Ga' }): y=z=0\}$ and $Y'_{0}=\{(x,y,z)\in\tau^{*}(\mathcal{E}_{\Ga'}): x=z=0\}$  be two 1-cycles in $\tau^{*}(\mathcal{E}_{\Ga' })$.
We have $ X'_{0}+p^{m}\cdot Y'_{0}=\mbox{div}(z)$, 
therefore in Chow group of $\tau^{*}(\mathcal{E}_{\Ga' })$, $ X'_{0}+p^{m}\cdot Y'_{0}=0$.
Similarly, for any $a\in \mathbb{F}_p\subset k$, set $X'_{a}=a\cdot X'_{0}=\{(x,y,z)\in\tau^{*}(\mathcal{E}_{\Ga' }): y=0,z=a\}$ and $Y'_{a}=a\cdot Y'_{0}=\{(x,y,z)\in\tau^{*}(\mathcal{E}_{\Ga' }): x=0,z=a\}$ then $\mbox{div}(z-a)=X'_{a}+p^{m}\cdot Y'_{a}$, so in Chow group $X'_{a}+p^{m}\cdot Y'_{a}=0$ for each $a\in \mathbb{F}_p$. By  applying  Lemma \ref{Pullback} to $F:E\rightarrow\tau^{*}(\E_{\Ga'})$, we get for each $a\in \mathbb{F}_p=A$
$$X_{a}^{0}+p^{m}\sum_{b_{1}\in \ker \tilde{f}}Y_{a}^{b_{1}}=0.$$
Similarly using $\mathcal{E}_{\Ga ''}$ and $F':E\rightarrow\E_{\Ga'}$ we get that for each $a\in \mathbb{F}_p=A$,
$$p^{-M}\sum_{b_{2}\in \ker\tilde{f^{*}}}X_{a}^{b_{2}}+Y_{a}^{0}=0
$$
Using these equation we get,
\begin{equation}\label{2}
0=\sum_{a\in A}\psi(a)\left( p^{-M}\sum_{b_{2}\in \ker \tilde{f^{*}}}X_{a}^{b_{2}}+Y_{a}^{0}\right)=p^{-M}\sum_{b_{2}\in\ker \tilde{f^{*}}}X_{b_{2}}+Y_{0}
\end{equation}
and 
\begin{equation}\label{3}
0=\sum_{a\in A}\psi(a)\left( p^{m}\sum_{b_{1}\in \ker \tilde{f}}Y_{a}^{b_{1}}+X_{a}^{0}\right)=p^{m}\sum_{b_{1}\in\ker \tilde{f}}Y_{b_{1}}+X_{0}.
\end{equation}
\begin{lemma}
Let $\mathcal{L}$ be a bimultiplicative $\Qlcl$-local system on $\Gap\times\Gap$ corresponding to   homomorphism $f,\ f^{*}$ as above then the relation between the two bases of $H^{2}(\Gap\times \Gap, \mathcal{L}^{\vee}(1))$ is
$$Y_{b_{1}}=-p^{-M}\sum_{b_{2}\in \pi_{0}(\ker \tilde{f}^{*})}\psi(-B(b_{1}, b_{2}))X_{b_{2}}$$
$$X_{b_{2}}=-p^{m}\sum_{b_{1}\in \pi_{0}(\ker \tilde{f})}\psi(B(b_{1}, b_{2}))Y_{b_{1}}$$
In other words, the change of basis matrix from ${\mathcal{B}_{X}}$ to ${\mathcal{B}_{Y}}$ is:
$$-p^{m}\cdot\begin{pmatrix} \psi(B(b_{1},b_{2}))
\end{pmatrix}^{T}_{b_{1}\in \pi_{0}(\ker f),\\ b_{2}\in \pi_{0}(\ker f^{*})}  \mbox{ and its inverse is }$$
$$-p^{-M}\cdot\begin{pmatrix} \psi(-B(b_{1},b_{2}))
\end{pmatrix}_{b_{1}\in \pi_{0}(\ker f),\\ b_{2}\in \pi_{0}(\ker f^{*})} $$
where $\psi: \mathbb{Z}/p\mathbb{Z}\rightarrow \Qlcl^{\times}$ is a fixed character, ${\mathcal{B}_{X}}=\{{X_{b_{2}}} : b_{2}\in \pi _{0}(\ker f^{*})\}$,  ${\mathcal{B}_{Y}}=\{{Y_{b_{1}}} : b_{1}\in \pi _{0}(\ker f)\}$ denote the two bases of $H^{2}(\Gap\times \Gap, \mathcal{L}^{\vee}(1))$ as explained in Section \ref{computation}.
\end{lemma}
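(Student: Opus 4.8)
The plan is to use the two relations \eqref{2} and \eqref{3} in the Chow group $\CH^1(E)$ together with the duality of the bases $\mathcal{B}_X$ and $\mathcal{B}_Y$. First I would recall that by part (3) of the Lemma on $\Gap\times\Gap$, for $b_1\in\ker\tilde f$ and $b_2\in\ker\tilde f^*$ the action of $G$ on the $1$-cycles $X^{b_2}_a,Y^{b_1}_a$ is governed by the pairing $B$, namely $k_1\cdot X^{b_2}_a = X^{b_2}_{a+B(k_1,b_2)}$ and $k_2\cdot Y^{b_1}_a = Y^{b_1}_{a-B(b_1,k_2)}$, as recorded in Section \ref{computation}. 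The idea is to translate \eqref{3} — which only involves the base components $X_0$ and $Y_{b_1}$ — by a general element $b_1'\in\ker\tilde f$ and use $G$-equivariance of the cycle class map to move it to an arbitrary component. Acting on \eqref{3} by $b_1'\in\ker\tilde f$ sends $X^0_a\mapsto X^0_{a+B(b_1',0)} = X^0_a$ and $Y^{b_1}_a\mapsto Y^{b_1+b_1'}_a$, so after relabeling and summing against $\psi(a)$ one gets, for every $b_1'\in\pi_0(\ker\tilde f)$,
\[
X_{0}= -p^{m}\sum_{b_{1}\in\pi_0(\ker\tilde f)}\psi(B(b_{1},0))\,Y_{b_{1}} = -p^{m}\sum_{b_1}Y_{b_1},
\]
which is not yet the desired formula; the right move is instead to act on \eqref{2} (or on the $b_2$-translate of a suitable relation) so that the pairing $B(b_1,b_2)$ genuinely appears.

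More precisely, I would act on \eqref{3} by an element $b_2\in\ker\tilde f^*$. Since $b_2\cdot X^0_a = X^{b_2}_a$ and $b_2\cdot Y^{b_1}_a = Y^{b_1}_{a-B(b_1,b_2)}$, applying $b_2$ to \eqref{3} and pushing through the cycle class map gives
\[
0 = p^{m}\sum_{b_{1}\in\ker\tilde f}\sum_{a\in A}\psi(a)\,Y^{b_1}_{a-B(b_1,b_2)} + \sum_{a\in A}\psi(a)\,X^{b_2}_a
= p^{m}\sum_{b_{1}\in\pi_0(\ker\tilde f)}\psi(B(b_1,b_2))\,Y_{b_1} + X_{b_2},
\]
which is exactly the second displayed identity $X_{b_{2}}=-p^{m}\sum_{b_{1}}\psi(B(b_{1},b_{2}))Y_{b_{1}}$. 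Symmetrically, acting on \eqref{2} by an element $b_1\in\ker\tilde f$, and using $b_1\cdot Y^0_a = Y^0_a$ (since $B(b_1,0)=0$) together with $b_1\cdot X^{b_2}_a = X^{b_2}_{a+B(b_1,b_2)}$, yields
\[
0 = p^{-M}\sum_{b_2\in\pi_0(\ker\tilde f^*)}\psi(-B(b_1,b_2))\,X_{b_2} + Y_{b_1},
\]
which is the first displayed identity. The reformulation as a change-of-basis matrix and its inverse is then immediate: the matrix from $\mathcal{B}_X$ to $\mathcal{B}_Y$ is $-p^m\bigl(\psi(B(b_1,b_2))\bigr)^T$ and its inverse is $-p^{-M}\bigl(\psi(-B(b_1,b_2))\bigr)$; that these are genuinely inverse matrices follows formally since both sets $\mathcal{B}_X,\mathcal{B}_Y$ were already shown in Section \ref{computation} to be bases, so the two linear relations between them must be mutually inverse, and in particular this forces $B$ to be a perfect pairing on $\pi_0(\ker\tilde f)\times\pi_0(\ker\tilde f^*)$.

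The main obstacle I anticipate is bookkeeping rather than conceptual: one must check carefully that the cycle class map $\CH^1(E)\to H^2(E,\Qlcl(1))$ is $G$-equivariant (so that translating the Chow-theoretic relations \eqref{2}, \eqref{3} by elements of $\ker\tilde f$, $\ker\tilde f^*$ is legitimate), and that the flat-pullback identities of Lemma \ref{Pullback} and Remark \ref{9.2} were applied on the right components — i.e. that $E|_{\ker\tilde f\times\Ga}$ and $E|_{\Ga\times\ker\tilde f^*}$ pull back from $\tau^*(\mathcal{E}_{\Ga'})$ and $\mathcal{E}_{\Ga''}$ as integral cycles, so that the cycle-level equations $X^0_a+p^m\sum_{b_1}Y^{b_1}_a=0$ and $p^{-M}\sum_{b_2}X^{b_2}_a+Y^0_a=0$ are valid before passing to cohomology. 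One also needs the harmless check that the sign and the powers $p^m$, $p^{-M}$ are consistent with the identifications $\ker\tilde f_{\perf}=\ker f$, $\ker\tilde f^*_{\perf}=\ker f^*$, and that $B$ does not change under passing from the perfect picture to the chosen algebraic models. Once these compatibilities are in place, the proof is just the two symmetric computations above.
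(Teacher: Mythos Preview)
Your approach is essentially identical to the paper's: both act on \eqref{2} by $b_1\in\ker\tilde f$ and on \eqref{3} by $b_2\in\ker\tilde f^*$, using the $G$-action formulas from Section~\ref{computation} to introduce the factors $\psi(\pm B(b_1,b_2))$. One slip to fix: you wrote $b_1\cdot Y^0_a=Y^0_a$ ``since $B(b_1,0)=0$'', but the correct formula from Section~\ref{computation} is $k_1\cdot Y^{b_1}_a=Y^{k_1+b_1}_a$, so $b_1\cdot Y^0_a=Y^{b_1}_a$ --- this is in fact exactly what your final displayed identity $Y_{b_1}=-p^{-M}\sum_{b_2}\psi(-B(b_1,b_2))X_{b_2}$ requires, so the error is only in the justification, not the conclusion.
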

\begin{proof}
From equation (\ref{2}),\ (\ref{3}), we have following relations in $H^{2}(\Gap\times \Gap, \mathcal{L}^{\vee}(1))$,
\begin{equation}\label{7}
p^{-M}\sum_{b_{2}\in\ker \tilde{f^{*}}}X_{b_{2}}+Y_{0}=0
\end{equation}and
\begin{equation}\label{8}
p^{m}\sum_{b_{1}\in\ker \tilde{f}}Y_{b_{1}}+X_{0}=0.
\end{equation}
Let $b_{1}\in \ker \tilde{f}$, $b_{2}\in \ker \tilde{f}^{*}$. By using action of $G\ (=\ker \tilde{f}\times\ker \tilde{f}^{*}\times\mathbb{Z}/p\mathbb{Z})$ on the Chow group of $E$, we have
$$b_{1}\cdot Y_{0}=Y_{b_{1}},\ \ \ b_{1}\cdot X_{b_{2}}=\psi(-B(b_{1},b_{2}))X_{b_{2}},\ \ \ b_{2}\cdot X_{0}=X_{b_{2}},\ \ \ b_{2}\cdot Y_{b_{1}}=\psi(B(b_{1},b_{2}))Y_{b_{1}}.$$
Using above relations and equation (\ref{7}) and (\ref{8}) we get that,
$$p^{-M}\sum_{b_{2}\in\ker \tilde{f^{*}}}\psi(-B(b_{1},b_{2}))X_{b_{2}}+Y_{b_{1}}=0$$
$$p^{m}\sum_{b_{1}\in\ker \tilde{f}}\psi(B(b_{1},b_{2})) Y_{b_{1}}+X_{b_{2}}=0$$
This proves the lemma.
\end{proof}

\section{The case of universal biextensions and dual pairs}
Let $U$ be a connected commutative unipotent algebraic group over $k$.
Let $\E_{U_{\perf}}$ be the universal biextension of $U_{\perf}\times U_{\perf}^{*}$ corresponding to $Id:U_{\perf}\rightarrow U_{\perf}$.
We can choose a model $U'$ for $U_{\perf}^{*}$ and hence a dual pair $(U,U',\E_{U})$ whose perfectization corresponds to the universal biextension $\E_{U_{\perf}}$ of $U_{\perf}\times U_{\perf}^{*}$.
Let $\L$ be the bimultiplicative $\Qlcl$-local system on $U\times U'$ such that $\L_{\perf}$ is a bimultiplicative $\Qlcl$-local system $U_{\perf}\times U_{\perf}^{*}$  corresponding to the universal biextension of $U_{\perf}\times U_{\perf}^{*}$.
Therefore by \ref{dim eq},  dimension of $H_{c}^{2\dim(U)}(U\times U', \mathcal{L}(\dim(U)))$ is $1$ and $H_{c}^{2\dim(U)}(U\times U', \mathcal{L}(\dim(U)))=\langle X_{0}^{*}\rangle=\langle {Y_{0}}^{*}\rangle$, 
where ${X_{0}}^{*}$ and ${Y_{0}}^{*}$ denotes the dual basis elements of $H_{c}^{2\dim(U)}(U_{\perf}\times {U_{\perf}}^{*}, \mathcal{L}(\dim(U)))$ corresponding to $X_{0}$ and  $Y_{0}$ respectively as in Section \ref{computation}.

\begin{definition}\label{multiplicity:universal}
For each dual pair $(U,U',\E_{U})$ we will associate a number $m(U,U',\E_{U})$ such that $m(U,U',\E _{U})$ satisfies  the  relation $X^{*}_{0}=\frac{1}{m(U,U',\E_{U})}\cdot Y^{*}_{0}$ in  $H_{c}^{2\dim(U)}(U\times U', \mathcal{L}(\dim(U)))$, or equivalently, in $H^{2\dim(U)}(U\times U',\L^{\vee}(\dim(U)))$, we have $X_{0}=m(U,U',\E_{U})\cdot Y_{0}$, where $\mathcal{L}$ is the bimultiplicative $\Qlcl$-local system on $U\times U'$ corresponding to the biextension $\E_{U}$ of $U\times U'$.
\end{definition}
\begin{remark}
Let $(U,U',\mathcal{E}_{U})$ be a dual pair and $\tau:U'\times U\rightarrow U\times U'$ such that $\tau(x,y)=(y,x)$.
 We will write $\tau^{*}(\mathcal{E}_{U})$ for  the fiber product of  $\tau$ and the morphism $\mathcal{E}_{U}\rightarrow U\times U'$.
 Then $(U',U,\tau^{*}(\mathcal{E}_{U}))$ is a dual pair and $m(U',U,\tau^{*}(\mathcal{E}_{U}))=\frac{1}{m(U,U',\E _{U})}$.
\end{remark}
\begin{remark}\label{multiplicity: Ga}
Let $(\Ga',\Ga'', \E)$  be a dual pair such that we have the following cartesian  square:
\begin{center}
\begin{tikzcd}
\E \arrow[r] \arrow[d]
& \Ga \arrow[d, "L"]\\
\Ga'\times\Ga''\arrow[r, "m"]
& 
\Ga
\end{tikzcd}
\end{center}
where $\Ga'=\spec(k[x^{p^{r}}]),\ \Ga''=\spec(k[y^{p^{s}}]),\ m(x,y)=x^{p^{r}}y^{p^{s}}$ for some $r,s \in \mathbb{Z}$ and $L$ is the Lang isogeny which sends $z$ to $z^{p}-z$ and $\E=\{(x,y,z)\in \Ga'\times\Ga''\times\Ga:x^{p^{r}}y^{p^{s}}=z^{p}-z \}$. 
In the cohomology $H^{2}_{c}(\Ga'\times\Ga'',\L(1))$, we have  $X^{*}_{0}=-p^{-(r-s)}\cdot Y^{*}_{0}$ \textit{i.e.} $m(\Ga',\Ga'', \E)=-p^{r-s}$, where $\L$ is a bimultiplicative $\Qlcl$-local system on $\Ga'\times\Ga''$ corresponding to biextension $\E$.
\end{remark}

Let $0\rightarrow \tilde{V}\rightarrow \tilde{U}\rightarrow \tilde{W}\rightarrow 0$ be a short exact sequence of perfect connected commutative unipotent groups. By duality (\ref{dualses}), we get another short exact sequence $0\rightarrow \tilde{W}^{*}\rightarrow \tilde{U}^{*}\rightarrow \tilde{V}^{*}\rightarrow 0$.
Let fix a dual pair $(U,U',\mathcal{E}_{U})$ $i.e.$ we fix a model for $\tilde{U} \ \mbox{and}\ \tilde{U}^{*}$ and in this section we consider $\E_{U}$ as biextension of $U\times U'$ by a finite group $A\leq\Qp/\Zp$.
 By Remark \ref{model:subgroup}, we can choose the models for $\tilde{V}$ and $\tilde{W}^{*}$, say $V$ and $W'$ such that $0\rightarrow V\rightarrow U$ and $0\rightarrow W'\rightarrow U'$ are exact.
Then $W=U/V$ and $V'=U'/W'$ give the models for $\tilde{W}$ and $\tilde{V}^{*}$ respectively.
Therefore we get the dual pairs 
$(V,V',\mathcal{E}_{V})\  \mbox{and}\ (W, W',\mathcal{E}_{W})$.

\begin{theorem}\label{SES}
 Let $0 \rightarrow V_{\perf} \xrightarrow{f} U_{\perf} \xrightarrow{g} W_{\perf} \rightarrow 0$ be a short exact sequence of perfect connected commutative   unipotent group scheme over $k$.  Let $(U,U',\mathcal{E}_{U}),\ (V,V',\mathcal{E}_{V})\ \mbox{and}\ (W, W',\mathcal{E}_{W})$ be the dual pairs as above.
  Then  $m(U,U',\mathcal{E}_{U})=m(V,V',\mathcal{E}_{V})\cdot 
  m(W, W',\mathcal{E}_{W})$.
\end{theorem}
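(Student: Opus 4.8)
The plan is to realise the universal biextension $\mathcal{E}_U$ as assembled from the two smaller universal biextensions $\mathcal{E}_V$ and $\mathcal{E}_W$ along ``partial fibrations'', and then to transport the cycles $X_0,Y_0$ through this picture using functoriality of the cycle class map (Remarks \ref{9.2}, \ref{9.3}) and flat pull-back (Lemma \ref{Pullback}). First I would record the geometry. By property (iii) of Serre duality the exact sequence $0\to V_{\perf}\xrightarrow{f}U_{\perf}\xrightarrow{g}W_{\perf}\to 0$ dualises to $0\to W'_{\perf}\to U'_{\perf}\xrightarrow{q}V'_{\perf}\to 0$, and by the choice of models these are realised by honest maps $g\colon U\twoheadrightarrow W$ and $q\colon U'\twoheadrightarrow V'$. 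Unwinding the universal property of biextensions (the Remark after Lemma \ref{biext-Hom}), the homomorphism attached to $\mathcal{E}_U|_{U\times W'}$ is $g$ and the one attached to $\mathcal{E}_U|_{V\times U'}$ is $q$, whence
$$\mathcal{E}_U|_{U\times W'}\;\cong\;(g\times\mathrm{id}_{W'})^{*}\mathcal{E}_W,\qquad\mathcal{E}_U|_{V\times U'}\;\cong\;(\mathrm{id}_V\times q)^{*}\mathcal{E}_V,$$
with the two projections $p_W\colon\mathcal{E}_U|_{U\times W'}\to\mathcal{E}_W$ and $p_V\colon\mathcal{E}_U|_{V\times U'}\to\mathcal{E}_V$ flat (base changes of the torsors $U\to W$, $U'\to V'$) of relative dimensions $\dim V$ and $\dim W$. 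Also $\mathcal{E}_U|_{V\times W'}$ is the trivial biextension, because its homomorphism factors as $V_{\perf}\hookrightarrow U_{\perf}\xrightarrow{g}W_{\perf}=0$; since $V,W'$ are connected it carries a unique trivialisation, and I let $Z_0$ denote the $\psi$-weighted sum over $A$ of the components of $\mathcal{E}_U|_{V\times W'}$ cut out by this trivialisation, a codimension-$\dim U$ cycle on $\mathcal{E}_U$.

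Next I would trace the cycles. For a universal biextension $\ker f=\ker f^{*}=0$, so in each of $\mathcal{E}_U,\mathcal{E}_V,\mathcal{E}_W$ the cycles $X_0,Y_0$ are supported on the trivial biextensions $\mathcal{E}_\bullet|_{\bullet\times\{0\}}$, $\mathcal{E}_\bullet|_{\{0\}\times\bullet}$ and are the $\psi$-weighted sums of the components picked out by the canonical partial trivialisations $\sigma_2$ along $\bullet\times\{0\}$ and $\sigma_1$ along $\{0\}\times\bullet$. These canonical trivialisations being compatible with pull-back of biextensions, the scheme-theoretic preimage $p_W^{-1}(X_0^{W})$ is exactly $X_0^{U}$ while $p_W^{-1}(Y_0^{W})$ is $Z_0$; symmetrically $p_V^{-1}(Y_0^{V})=Y_0^{U}$ and $p_V^{-1}(X_0^{V})=Z_0$ (the two descriptions of $Z_0$ coincide by uniqueness of the trivialisation of $\mathcal{E}_U|_{V\times W'}$). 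Applying Remark \ref{9.2} to pull cycle classes back along the flat maps $p_W,p_V$ and Remark \ref{9.3} to push them forward along the closed immersions $\mathcal{E}_U|_{V\times W'}\hookrightarrow\mathcal{E}_U|_{U\times W'}\hookrightarrow\mathcal{E}_U$ and $\mathcal{E}_U|_{V\times W'}\hookrightarrow\mathcal{E}_U|_{V\times U'}\hookrightarrow\mathcal{E}_U$ of smooth schemes, and using the defining relations $X_0^{W}=m(W,W',\mathcal{E}_W)\,Y_0^{W}$ and $X_0^{V}=m(V,V',\mathcal{E}_V)\,Y_0^{V}$, one obtains in the one-dimensional space $H^{2\dim U}(U\times U',\mathcal{L}^{\vee}(\dim U))$ (Consequence \ref{dim eq})
$$[X_0^{U}]=m(W,W',\mathcal{E}_W)\,[Z_0],\qquad[Y_0^{U}]=\tfrac{1}{m(V,V',\mathcal{E}_V)}\,[Z_0].$$
Since $[X_0^{U}]$ spans this space we get $[Z_0]\neq 0$, and dividing yields $X_0^{U}=m(V,V',\mathcal{E}_V)\,m(W,W',\mathcal{E}_W)\,Y_0^{U}$, i.e. $m(U,U',\mathcal{E}_U)=m(V,V',\mathcal{E}_V)\cdot m(W,W',\mathcal{E}_W)$.

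The technical heart, and the step I expect to take real care, is the middle one: checking that the canonical partial trivialisations $\sigma_1,\sigma_2$ of $\mathcal{E}_U$ genuinely pull back from those of $\mathcal{E}_V,\mathcal{E}_W$, so that $X_0^{U},Y_0^{U}$ and $Z_0$ really are the asserted preimages with no hidden twist by the biadditive pairing $B$ — this is precisely where connectedness of $V$ and $W'$ is used, forcing uniqueness of the relevant trivialisations — and that the identifications $\mathcal{E}_U|_{U\times W'}\cong(g\times\mathrm{id})^{*}\mathcal{E}_W$, $\mathcal{E}_U|_{V\times U'}\cong(\mathrm{id}\times q)^{*}\mathcal{E}_V$ are compatible with the chosen algebraic models, so that Remarks \ref{9.2} and \ref{9.3} may be applied on honest smooth varieties and not only after perfectisation. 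One could alternatively first reduce, by an induction on a composition series of $W$ pulled back to a filtration of $U$, to the case $W_{\perf}\cong\mathbb{G}_{a,\perf}$, where everything becomes the explicit computation of Section \ref{additive group}; but the direct argument above seems cleaner.
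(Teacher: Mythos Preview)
Your proposal is correct and is essentially the paper's own argument: the paper also restricts $\mathcal{E}_U$ to $U\times W'$ and to $V\times U'$, pulls back along the flat maps to $\mathcal{E}_W$ and $\mathcal{E}_V$ (Lemma~\ref{Pullback}/Remark~\ref{9.2}), pushes forward along the closed immersions via Remark~\ref{9.3}, and links $X_0^U$ and $Y_0^U$ through the intermediate cycle supported on $V\times W'$ (your $Z_0$, their $\sum_{a}\psi(a)[\tilde{f}(V)\times\tilde{g^{*}}(W')\times a]$). The only cosmetic difference is that the paper writes out the trivialisations $\sigma_1^{UW'},\sigma_2^{UW'},\sigma_1^{VU'},\sigma_2^{VU'}$ explicitly rather than invoking uniqueness, but the content is the same.
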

\begin{proof}
We have $0 \rightarrow V_{\perf} \xrightarrow{f} U_{\perf} \xrightarrow{g} W_{\perf} \rightarrow 0$, by duality (\ref{dualses})  $0 \rightarrow W'_{\perf} \xrightarrow{g^{*}} U'_{\perf} \xrightarrow{f^{*}} V'_{\perf} \rightarrow 0$.  
Using these two short exact sequences and discussion above, we get short exact sequences of commutative connected unipotent algebraic group scheme over $k$,
$0 \rightarrow V\xrightarrow{\tilde{f}} U \xrightarrow{\tilde{g}} W \rightarrow 0$  and $0 \rightarrow W'\xrightarrow{\tilde{g^{*}}} U' \xrightarrow{\tilde{f^{*}}} V' \rightarrow 0$. 
Note that  all models are  reduced group schemes by definition.
We have the following commutative diagram of cartesian squares,
\begin{center}
\begin{tikzcd}
\mathcal{E}_{V}\arrow[d]
& \mathcal{E}_{U}\vert_{V\times U'} \arrow[d]\arrow[l, two heads, swap,"F"]\arrow[hookrightarrow,"j"]{r}
&   \mathcal{E}_{U}\arrow[d]\arrow[hookleftarrow,"j_{1}"]{r}
& \mathcal{E}_{U}\vert_{U\times W'} \arrow[d]\arrow[r, two heads,"F_{1}"]
&\mathcal{E}_{W}\arrow[d]\\
V\times V'
& V\times U'\arrow[l, two heads, swap, "Id\times \tilde{f^{*}}"]\arrow[hookrightarrow, "\tilde{f}\times Id" ]{r}
&U\times U'\arrow[hookleftarrow, "Id\times \tilde{g^{*}}" ]{r}
&U\times W'\arrow[r, two heads, "\tilde{g}\times Id"]
& W\times W'
\end{tikzcd}
\end{center}
where all vertical maps are $A$-torsors for a finite subgroup $A\leq\Qp/\Zp$.
Consider the following commutative diagrams:
\begin{center}
\begin{tikzcd}
 \E_{W}\vert_{0\times W'} \arrow[hookrightarrow]{r}\arrow[d]
& \E_{W} \arrow[d]\arrow[hookleftarrow]{r}
&   \E_{W}\vert_{W\times 0} \arrow[d]\\
0\times W' \arrow[hookrightarrow]{r} \arrow[u, bend left, "\sigma^{W}_{1}"]
& W\times W'\arrow[hookleftarrow]{r} \
& W\times 0\arrow[l]\arrow[u, bend right, swap, "\sigma^{W}_{2}"]
\end{tikzcd}
\begin{tikzcd}
 \E_{U}\vert_{\tilde{f}(V)\times W'} \arrow[hookrightarrow]{r}\arrow[d]
& \E_{U}\vert_{U\times W'}\arrow[d]\arrow[hookleftarrow]{r}
&  \E_{U}\vert_{U\times 0} \arrow[d]\\
\tilde{f}(V')\times W' \arrow[hookrightarrow]{r} \arrow[u, bend left, "\sigma^{VW'}_{1}"]
& U\times W'\arrow[hookleftarrow]{r} \
& U\times 0\arrow[l]\arrow[u, bend right, swap, "\sigma^{UW'}_{2}"]
\end{tikzcd}
\end{center}
where $\sigma_{1}^{W}$, $\sigma_{1}^{W}$,$\sigma^{UW'}_{1}$,$\sigma^{UW'}_{2}$ are trivializations. 
We will use same notations as defined in Section \ref{computation}, for $a\in A$ $X^{0,W}_{a}=a\cdot\sigma_{2}^{W}(W\times 0)$, $Y^{0,W}_{a}=a\cdot\sigma_{1}^{W}(0\times W')$, 
$X_{0}^{W}=\sum_{a\in A}\psi(a)X^{0,W}_{a}$, $Y_{0}^{W}=\sum_{a\in A}\psi(a)Y^{0,W}_{a}$ and 
we will denote $a\cdot\sigma_{1}^{UW'}( \tilde{f}(V')\times W')$ by $[\tilde{f}(V')\times W'\times {a}]$.

Let $\L_{W}$ be a bimultiplicative $\Qlcl$-local system on $W\times W$ corresponding to biextension $\E_{W}$ of $W\times W$.
Consider $H^{2\dim(W)}(W\times W',\L^{\vee}_{W}(\dim W))$ as a subspace of $H^{2\dim(W)}(\E_{W},\Qlcl(\dim(W)))$ then we have
$H^{2\dim(W)}(W\times W',\L^{\vee}_{W}(\dim W))=\langle X^{W}_{0}\rangle=\langle Y^{W}_{0}\rangle$ and
$X_{0}^{W}=m(W,W',\E_{W})\cdot Y_{0}^{W}$.
By Lemma \ref{Pullback}, $F^{*}_{1}(X^{0,W}_{0})=\sigma_{2}^{UW'}(U\times 0)$ and $F^{*}_{1}(Y^{0,W}_{0})=\sigma_{1}^{UW'}( \tilde{f}(V')\times W')=[\tilde{f}(V')\times W'\times {0}]$.
By applying Remark \ref{9.2} to $F_{1}$ and Remark \ref{9.3} to $j_{1}$, we get
\begin{equation}\label{eq:W}
X^{U}_{0}=m(W,W',\E_{W})\sum_{a\in A}\psi(a)[\tilde{f}(V')\times \tilde{g^{*}}(W')\times {a}]
\end{equation}

Consider the another commutative diagrams:
\begin{center}
\begin{tikzcd}
 \E_{V}\vert_{0\times V'} \arrow[hookrightarrow]{r}\arrow[d]
& \E_{V} \arrow[d]\arrow[hookleftarrow]{r}
&   \E_{V}\vert_{V\times 0} \arrow[d]\\
0\times V' \arrow[hookrightarrow]{r} \arrow[u, bend left, "\sigma^{V}_{1}"]
& V\times V'\arrow[hookleftarrow]{r} \
& V\times 0\arrow[l]\arrow[u, bend right, swap, "\sigma^{V}_{2}"]
\end{tikzcd}
\begin{tikzcd}
 \E_{V}\vert_{0\times U'} \arrow[hookrightarrow]{r}\arrow[d]
& \E_{V}\vert_{V\times U'}\arrow[d]\arrow[hookleftarrow]{r}
&  \E_{V}\vert_{V\times \tilde{g^{*}}(W')} \arrow[d]\\
0\times U' \arrow[hookrightarrow]{r} \arrow[u, bend left, "\sigma^{VU'}_{1}"]
& V\times U'\arrow[hookleftarrow]{r} \
& V\times \tilde{g^{*}}(W')\arrow[l]\arrow[u, bend right, swap, "\sigma^{VU'}_{2}"]
\end{tikzcd}
\end{center}
where $\sigma_{1}^{V}$, $\sigma_{1}^{V}$,$\sigma^{VU'}_{1}$ $\sigma^{VU'}_{2}$ are trivializations. 
We will use same notations as defined in Section \ref{computation}, for $a\in A$ $X^{0,V}_{a}=a\cdot\sigma_{2}^{V}(V\times 0)$, $Y^{0,V}_{a}=a\cdot\sigma_{1}^{V}(0\times V')$, 
$X_{0}^{V}=\sum_{a\in A}\psi(a)X^{0,V}_{a}$, $Y_{0}^{V}=\sum_{a\in A}\psi(a)Y^{0,V}_{a}$
and we will denote $a\cdot\sigma_{1}^{VU'}( V\times\tilde{g^{*}}(W')$ by $[V\times \tilde{g^{*}}(W')\times {a}]$.

Let $\L_{V}$ be a bimultiplicative $\Qlcl$-local system on $V\times V'$ corresponding to the biextension $\E_{V}$ of $V\times V'$.
Consider $H^{2\dim(V)}(V\times V',\L^{\vee}_{V}(\dim V))$ as a subspace of $H^{2\dim(V)}(\E_{V},\Qlcl(\dim(V)))$ then we have $H^{2\dim(V)}(V\times V',\L^{\vee}_{V}(\dim V))=\langle X^{V}_{0}\rangle=\langle Y^{V}_{0}\rangle$ and $X_{0}^{V}=m(V,V',\E_{V})\cdot Y_{0}^{V}$.
By Lemma \ref{Pullback}, $F^{*}(X^{0,V}_{0})=\sigma_{2}^{VU'}(V\times \tilde{g^{*}}(W'))=[V\times\tilde{g^{*}}(W')\times {0}]$ and $F^{*}(Y^{0,V}_{0})=\sigma_{1}^{VU'}(0\times U')$.
By applying Remark \ref{9.2} to $F$ and Remark \ref{9.3} to $j$, we get

\begin{equation}\label{eq:V}
\sum_{a\in A}\psi(a)[\tilde{f}(V)\times \tilde{g^{*}}(W')\times {a}]=m(V,V',\E_{V})Y^{U}_{0}
\end{equation}
Comparing equations (\ref{eq:W}) and (\ref{eq:V}) we get, $X^{U}_{0}=m(V,V',\E_{V})m(W,W',\E_{W})\cdot Y^{U}_{0}$ in $H^{2\dim U}(\E_{U},\Qlcl(\dim U))$
and $H^{2\dim U}(U\times U',\L^{\vee}_{U}(\dim U))=\langle X^{U}_{0}\rangle=\langle Y^{U}_{0}\rangle
$.
This proves  the theorem.

\end{proof}

\begin{corollary}\label{universal}
Let $U$ be a  connected commutative unipotent algebraic group over $k$ of dimension $n$ and let $(U,U',\mathcal{E}_{U})$ be a dual pair.   Then $m(U,U',\E_{U})=(-1)^{n}p^{r}$, for some $r \in \mathbb{Z}$.
\end{corollary}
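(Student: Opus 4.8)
The plan is to reduce the statement to the base case $U = \mathbb{G}_a$ handled in Remark~\ref{multiplicity: Ga} by using the multiplicativity in short exact sequences established in Theorem~\ref{SES}. First I would recall that since $k$ is algebraically closed, the perfect connected commutative unipotent group $U_{\perf}$ admits a finite filtration
$$0 = W_0 \subseteq W_1 \subseteq \cdots \subseteq W_n = U_{\perf}$$
by closed subgroups with each successive quotient $W_i/W_{i-1} \cong \mathbb{G}_{a,\perf}$, and in particular $n = \dim U$. Using Remark~\ref{model:subgroup} repeatedly, this filtration can be realized by a filtration of genuine (reduced) algebraic group models, refining the given dual pair $(U, U', \mathcal{E}_U)$ to a compatible system of dual pairs for each subquotient.

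The key step is then an induction on $n = \dim U$. For the inductive step, I would apply Theorem~\ref{SES} to the short exact sequence $0 \to V_{\perf} \to U_{\perf} \to W_{\perf} \to 0$ where $W_{\perf} \cong \mathbb{G}_{a,\perf}$ is the top quotient and $V_{\perf}$ is the kernel (so $\dim V = n-1$), together with the induced dual pairs $(V, V', \mathcal{E}_V)$ and $(W, W', \mathcal{E}_W)$ constructed in the discussion preceding Theorem~\ref{SES}. This gives
$$m(U, U', \mathcal{E}_U) = m(V, V', \mathcal{E}_V) \cdot m(W, W', \mathcal{E}_W).$$
By the inductive hypothesis $m(V, V', \mathcal{E}_V) = (-1)^{n-1} p^{r_1}$ for some $r_1 \in \mathbb{Z}$. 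For the base case $\dim W = 1$, the dual pair $(W, W', \mathcal{E}_W)$ is of the form treated in Remark~\ref{multiplicity: Ga}: the biextension $\mathcal{E}_W$ sits in a cartesian square over a monomial pairing $\mathbb{G}_a' \times \mathbb{G}_a'' \to \mathbb{G}_a$, $(x,y) \mapsto x^{p^r} y^{p^s}$, pulled back along the Lang isogeny, so $m(W, W', \mathcal{E}_W) = -p^{r-s}$. Multiplying, $m(U, U', \mathcal{E}_U) = (-1)^{n-1} p^{r_1} \cdot (-p^{r-s}) = (-1)^n p^{r_1 + r - s}$, which has the claimed form with $r = r_1 + r - s \in \mathbb{Z}$.

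The main obstacle I anticipate is verifying that the one-dimensional quotient dual pair $(W, W', \mathcal{E}_W)$ arising from the filtration really does have the monomial-pairing form required by Remark~\ref{multiplicity: Ga}, rather than some more general biextension of $\mathbb{G}_a \times \mathbb{G}_a$. This should follow from the classification in Section~\ref{additive group}: every biextension of $\mathbb{G}_{a,\perf} \times \mathbb{G}_{a,\perf}$ corresponds to some $f \in \End(\mathbb{G}_{a,\perf}) = k\{\Phi, \Phi^{-1}\}$, and here $f$ is an isomorphism (since $W_{\perf}^* \cong W'_{\perf}$ via $\mathcal{E}_W$), hence $f = a\Phi^n$ for a single unit $a \in k^\times$ and some $n \in \mathbb{Z}$; absorbing $a$ into a change of coordinates on $\mathbb{G}_a''$ and choosing the models $\mathbb{G}_a' = \spec k[x^{p^m}]$, $\mathbb{G}_a'' = \spec k[y^{p^{-M}}]$ as in Section~\ref{additive group} puts $\mathcal{E}_W$ exactly in the form of Remark~\ref{multiplicity: Ga}. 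One should also check that the inductive construction of models is consistent with the universal biextension structure, but this is exactly what the discussion before Theorem~\ref{SES} arranges. A minor point is that $r$ is only well-defined up to the choice of models, which is precisely why the statement asserts only the existence of some $r \in \mathbb{Z}$.
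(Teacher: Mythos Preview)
Your proposal is correct and follows essentially the same approach as the paper: induction on $\dim U$ via Theorem~\ref{SES}, with Remark~\ref{multiplicity: Ga} handling the one-dimensional case. The only cosmetic difference is that the paper peels off a copy of $\mathbb{G}_a$ as a \emph{subgroup} (invoking the well-known fact that every connected commutative unipotent group over $k$ contains a copy of $\mathbb{G}_a$), whereas you peel it off as a \emph{quotient} via a composition series; either direction works equally well in Theorem~\ref{SES}. Your extra paragraph justifying why the induced one-dimensional dual pair really has the monomial form of Remark~\ref{multiplicity: Ga} (using that an isomorphism in $k\{\Phi,\Phi^{-1}\}$ must be a monomial $a\Phi^n$) fills in a detail the paper leaves implicit.
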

\begin{proof}
 Corollary follows from the  previous Theorem \ref{SES}, Remark \ref{multiplicity: Ga}, an induction on dimension of $U$ and a well known fact about the unipotent algebraic group,  every connected commutative algebraic unipotent group scheme over $k$  has a subgroup isomorphic to $\Ga$.
\end{proof}

\section{Proof of main results}\label{sec:proof}
In this section we complete the proofs of the main results of this paper.
\begin{remark}{(cf.\cite[1.5]{Intersection})}\label{multiplicity} Let $X$ be a scheme of finite type over $k$ and $X_{1},\cdots,X_{r}$ be the irreducible components of $X$. 
The local rings $\mathcal{O}_{X_{i},X}$ all are zero dimensional (Artinian).
The \textit{geometric multiplicity} $m_{i}$ of $X_{i}$ in $X$ is defined to be the length of $\mathcal{O}_{X_{i},X}$ and the cycle $[X]=\sum_{i=1}^{r}m_{i}[X_{i,red}]$. 

Let $f:U_{1}\rightarrow U_{2}$ be an isogeny of connected unipotent algebraic groups, then we define  $\nu(f,U_{1},U_{2})$ to be the length of local ring $\mathcal{O}_{0,\ker(f)}$, therefore $[\ker(f)]=\sum_{b\in\ker(f)}\nu(f,U_{1},U_{2})[b_{red}]$. 
\end{remark}
\begin{remark}\label{def:constant}
Let $U_{1},\ U_{2}$ be two connected commutative unipotent algebraic groups. Let $f:U_{1,\perf}\rightarrow U_{2,\perf}$ be any isogeny between their perfectizations. By Remark \ref{model}, $f=\Phi^{-N}_{U_{2}/k}\circ f'_{\perf}$ for some isogeny $f':U_{1}\rightarrow U_{2}^{(p^{N})}$.
Then we define $\nu(f,U_{1},U_{2}):=p^{-N\cdot\dim(U_{2})}\cdot\nu(f',U_{1},U_{2}^{(p^{N})})$.  Note that this is an integral power of $p$.
\end{remark}
Now let $(U_{1},U'_{1},\E_{U_{1}})$ and $(U_{2},U'_{2},\E_{U_{2}})$ be two dual pairs giving us identifications ${U_{i,\perf}}^*\cong U'_{i,\perf}$.  Let $\L$ be a bimultiplicative $\Qlcl$-local system on  $U_{1}\times U_{2}$ such that the corresponding homomorphisms $f:U_{1,\perf}\rightarrow U'_{2,\perf},\ f^{*}:U_{2,\perf}\rightarrow U'_{1,\perf}$ are isogenies. We define the constants $m(U_{1},U_{2},\L)$ and $m(U_{2},U_{1},\tau^{*}(\L))$ as follows:
\begin{equation}
m(U_{1},U_{2},\L)=\frac{m(U_{2},U'_{2},\E_{U_{2}})}{\nu(f,U_{1},U'_{2})}\ \mbox{and}\ m(U_{2},U_{1},\tau^{*}(\L))=\frac{m(U_{1},U'_{1},\E_{U_{1}})}{\nu(f^{*},U_{2},U'_{1})}.
\end{equation}
By Corollary \ref{universal}, it follows that the number 
\begin{equation}\label{eq:const}
m(U_{1},U_{2},\L)={(-1)^{\dim(U_{2})}}\cdot{p^{r}}, \mbox{ for some }r\in \mathbb{Z}.
\end{equation}

\begin{remark}
The constants $m(U_{1},U_{2},\L)$ and $m(U_{2},U_{1},\tau^{*}(\L))$ in fact do not depend on the choices of the dual pairs made above. 
This is because, if we choose different dual pairs, then the numerators and denominators both scale by the same factors. This fact also follows from the next Lemma.
\end{remark}

\begin{lemma}\label{isogeny}
Let $U_{1}$ and $U_{2}$ be two  connected commutative unipotent algebraic groups over $k$ of dimension $d$.
Let $\L$ be a bimultiplicative $\Qlcl$-local system on $U_{1}\times U_{2}$ such that the corresponding homomorphism $f:U_{1,\perf}\rightarrow {U_{2,\perf}}^{*}$ is an isogeny. Then the relations between two bases of $H_{c}^{2d}(U_{1}\times U_{2}, \mathcal{L}(d))$ is given by,
$${Y^{*}_{b_{1}}}=\frac{1}{m(U_{1},U_{2},\L)}\sum_{b_{2}\in \pi_{0}(\ker f^{*})}\psi(B(b_{1},b_{2})){X^{*}_{b_{2}}}$$ and
$${X^{*}_{b_{2}}}=\frac{1}{m(U_{2},U_{1},\tau^{*}(\L))}\sum_{b_{1}\in \pi_{0}(\ker f)}\psi(-B(b_{1},b_{2})){Y^{*}_{b_{1}}},$$
where  $\psi$ is the fixed character $\Qp/\Zp\hookrightarrow \Qlcl^\times$. Also $m(U_{2},U_{1},\tau^{*}(\L))\cdot m(U_{1},U_{2},\L)=|\ker(f)|=|\ker(f^{*})|$.
\end{lemma}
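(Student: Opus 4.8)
The plan is to deduce the lemma from the case of universal biextensions settled in Section~\ref{sec:proof}, by pulling back cycles along the two cartesian squares furnished by the universal property of $\E_{U_1}$ and $\E_{U_2}$ (the Remark after Lemma~\ref{biext-Hom}). As in Section~\ref{computation} I work with the biextension $\pi:E\to U_1\times U_2$ by a finite $A\le\Qp/\Zp$ whose $\psi$-isotypic part is $\L=\L_\psi$, so that $H^{2d}_c(U_1\times U_2,\L(d))$ and $H^{2d}(U_1\times U_2,\L^\vee(d))$ are the corresponding isotypic summands of $H^{2d}_c(E,\Qlcl(d))$ and $H^{2d}(E,\Qlcl(d))$. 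Since $f$ is an isogeny, $\ker f$ and $\ker f^*$ are finite and, a finite perfect group scheme over $k$ being étale (perfect rings are reduced), one has $|\ker f|=|\pi_0(\ker f)|$, the reduced subgroups $K_1\le U_1$, $K_2\le U_2$ are finite étale, each component $K_i^{b_i}$ is a single reduced point, and $d=D$. By Poincar\'e duality (Remark~\ref{Poincare Duality}) it suffices to establish the two corresponding relations between the bases $\mathcal{B}_X=\{X_{b_2}\}$ and $\mathcal{B}_Y=\{Y_{b_1}\}$ of $H^{2d}(U_1\times U_2,\L^\vee_\psi(d))$; dualizing then yields the stated relations for $X^*_{b_2},Y^*_{b_1}$.

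First I would fix dual pairs $(U_1,U_1',\E_{U_1})$, $(U_2,U_2',\E_{U_2})$ with respect to which $f:U_1\to U_2'$ and $f^*:U_2\to U_1'$ are algebraic; being finite surjective morphisms of smooth varieties they are finite flat, hence $Id\times f^*:U_1\times U_2\to U_1\times U_1'$ and $f\times Id:U_1\times U_2\to U_2'\times U_2$ are flat. By the universal property recalled after Lemma~\ref{biext-Hom}, $E$ is the pullback of $\E_{U_1}$ along $Id\times f^*$ and the pullback of $\tau^*(\E_{U_2})$ along $f\times Id$; let $F_1:E\to\E_{U_1}$ and $F_2:E\to\tau^*(\E_{U_2})$ be the resulting (flat, $A$-equivariant) top arrows. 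On $\E_{U_1}$ the $\psi^{-1}$-isotypic summand $H^{2d}(U_1\times U_1',\L^\vee_{U_1}(d))$ is one-dimensional and $X_0^{U_1}=m(U_1,U_1',\E_{U_1})\,Y_0^{U_1}$ by Definition~\ref{multiplicity:universal}. Pushing this relation through the flat pullback $F_1^*$, which is compatible with cycle classes (a flat-pullback refinement of Remark~\ref{9.2}), and observing that $F_1$ carries $Y_0^{U_1}$ to $Y_0$ since $(Id\times f^*)^{-1}(\{0\}\times U_1')=\{0\}\times U_2$ is reduced, while it carries $X_0^{U_1}$ to $\nu(f^*,U_2,U_1')\sum_{b_2}X_{b_2}$ since $(Id\times f^*)^{-1}(U_1\times\{0\})=U_1\times\ker f^*$ and each reduced component $U_1\times\{b_2\}$ occurs with the geometric multiplicity $\nu(f^*,U_2,U_1')$, one obtains
$$\nu(f^*,U_2,U_1')\sum_{b_2\in\pi_0(\ker f^*)}X_{b_2}=m(U_1,U_1',\E_{U_1})\,Y_0$$
in $H^{2d}(E,\Qlcl(d))$. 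Acting by $k_1\in\ker f$ and using $k_1\cdot X_{b_2}=\psi(-B(k_1,b_2))X_{b_2}$, $k_1\cdot Y_0=Y_{k_1}$ from Section~\ref{computation}, together with the definition $m(U_2,U_1,\tau^*(\L))=m(U_1,U_1',\E_{U_1})/\nu(f^*,U_2,U_1')$, gives $Y_{b_1}=\frac{1}{m(U_2,U_1,\tau^*(\L))}\sum_{b_2}\psi(-B(b_1,b_2))X_{b_2}$, the second displayed relation of the lemma.

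Running the same argument with $F_2:E\to\tau^*(\E_{U_2})$ — starting from $X_0^{U_2'}=m(U_2',U_2,\tau^*(\E_{U_2}))\,Y_0^{U_2'}=\tfrac{1}{m(U_2,U_2',\E_{U_2})}Y_0^{U_2'}$, using that $F_2$ carries $X_0^{U_2'}$ to $X_0$ and $Y_0^{U_2'}$ to $\nu(f,U_1,U_2')\sum_{b_1}Y_{b_1}$, acting by $k_2\in\ker f^*$, and invoking $m(U_1,U_2,\L)=m(U_2,U_2',\E_{U_2})/\nu(f,U_1,U_2')$ — gives the first relation $X_{b_2}=\frac{1}{m(U_1,U_2,\L)}\sum_{b_1}\psi(B(b_1,b_2))Y_{b_1}$. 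Finally, since $\mathcal{B}_X$ and $\mathcal{B}_Y$ are bases of the same space (Section~\ref{computation}), the two change-of-basis matrices $\bigl(\tfrac{1}{m(U_1,U_2,\L)}\psi(B(b_1,b_2))\bigr)$ and $\bigl(\tfrac{1}{m(U_2,U_1,\tau^*(\L))}\psi(-B(b_1,b_2))\bigr)$ must be mutually inverse; the diagonal entry of their product equals $\tfrac{|\pi_0(\ker f)|}{m(U_1,U_2,\L)\,m(U_2,U_1,\tau^*(\L))}$, so comparing with $1$ and using Consequence~\ref{dim eq} yields $m(U_1,U_2,\L)\cdot m(U_2,U_1,\tau^*(\L))=|\pi_0(\ker f)|=|\ker f|=|\ker f^*|$, while the vanishing of the off-diagonal entries is exactly the nondegeneracy of $B$ (Corollary~\ref{nondegenerate}). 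I expect the main technical point to be the bookkeeping of the geometric multiplicities $\nu(f,U_1,U_2')$ and $\nu(f^*,U_2,U_1')$ in these cycle-theoretic pullbacks — verifying that they are precisely the factors appearing in the definitions of $m(U_1,U_2,\L)$ and $m(U_2,U_1,\tau^*(\L))$, so that they cancel — and justifying the compatibility of flat pullback with the cycle class map in the presence of non-reduced fibres, a mild strengthening of Remarks~\ref{9.2} and~\ref{9.3}.
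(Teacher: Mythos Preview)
Your proposal is correct and follows essentially the same route as the paper: choose dual pairs making $f,f^*$ algebraic, pull back the one-dimensional relations $X_0=m\cdot Y_0$ from $\E_{U_1}$ and $\tau^*(\E_{U_2})$ along the flat maps $Id\times f^*$ and $f\times Id$, track the geometric multiplicities $\nu(f^*,U_2,U_1')$ and $\nu(f,U_1,U_2')$ appearing in the fibre cycles, and then use the $G$-action to translate from $Y_0,X_0$ to arbitrary $Y_{b_1},X_{b_2}$. The only organizational differences are that the paper first pulls back the individual unweighted cycles $X^0_a,Y^0_a$ and then forms the $\psi$-weighted sums, whereas you pull back the weighted classes directly (equivalent by $A$-equivariance of the maps), and that you spell out the product identity $m(U_1,U_2,\L)\cdot m(U_2,U_1,\tau^*(\L))=|\ker f|$ by reading the diagonal of the composite change-of-basis matrix, which the paper leaves implicit; your observation that the off-diagonal vanishing already yields Corollary~\ref{nondegenerate} is a bonus and not circular, since you are deducing it rather than invoking it.
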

\begin{proof}
Let us choose models $U'_{1}$ for ${U_{1,\perf}}^{*}$ and $U'_{2}$ for ${U_{2,\perf}}^*$ such that we have algebraic morphisms $f':U_{1}\rightarrow U'_{2}$, $f'^{*}:U_{2}\rightarrow U'_{1}$ whose perfectizations correspond to the given bimultiplicative local system $\L$ and the models of duals give rise to dual pairs  $(U_{1}, U'_{1},\mathcal{E}_{U_{1}})$, $(U_{2}, U'_{2},\mathcal{E}_{U_{2}})$. 
By Remark \ref{rk:bimultbiext}, $\L$  corresponds to a biextension of $U_{1}\times U_{2}$ by $\Qp/\Zp$. 
Let $\pi:E\rightarrow U_{1}\times U_{2}$ denotes the connected component of above biextension, which is a biextension of $U_{1}\times U_{2}$ by a finite subgroup $A$ of $\Qp/\Zp$ and $\pi^{*}(\L)$ is a trivial local system  and $\L$ equals to $\L_{\psi}$, so $\L^{\vee}$ equals $\L_{\psi^{\vee}}$ for  our fixed character $\psi:A\hookrightarrow \Qlcl^\times$.
 
 Consider the following  commutative diagram of pullback squares,
\begin{center}
\begin{tikzcd}
\mathcal{E}_{U_{1}}\arrow[d]
& E\arrow[d]\arrow[r]\arrow[l]
&\tau^{*}(\mathcal{E}_{U_{2}})\arrow[d]\\
U_{1}\times U_{1}'
&U_{1}\times U_{2}\arrow[l, swap, "Id\times f'^{*}"]\arrow[r, "f'\times Id"]
&U_{2}'\times U_{2}.
\end{tikzcd}
\end{center}

By Corollary \ref{universal} and Lemma \ref{Pullback} we get the following relations in $H^{2d}(E,\Qlcl(d))$:
$$\nu(f^{*},U_{2}, U'_{1})\sum_{b_{2}\in \pi_{0}(\ker f'^{*})}X^{b_{2}}_{0}-{m(U_{1},U'_{1},\E_{U_{1}})}Y^{0}_{0}=0$$ and
$$m(U'_{2},U_{2},\tau^{*}(\E_{U_{2}}))\cdot\nu(f,U_{1}, U_{2})\sum_{b_{1}\in \pi_{0}(\ker f')}Y^{b_{1}}_{0}-X^{0}_{0}=0.$$
That is,
$$Y^{0}_{0}=\frac{\nu(f^{*},U_{2}, U'_{1})}{m(U_{1},U'_{1},\E_{U_{1}})}\sum_{b_{2}\in \pi_{0}(\ker f'^{*})}X^{b_{2}}_{0}$$ and
$$X^{0}_{0}=\frac{\nu(f,U_{1},
U'_{2})}{m(U_{2},U'_{2},\E_{U_{2}})}\sum_{b_{1}\in \pi_{0}(\ker f')}Y^{b_{1}}_{0}.$$
By using the action of $A$ we get that,
$$Y^{0}_{a}=\frac{\nu(f^{*},U_{2},U'_{1})}{m(U_{1},U'_{1},\E_{U_{1}})}\sum_{b_{2}\in \pi_{0}(\ker f'^{*})}X^{b_{2}}_{a}$$ and
$$X^{0}_{a}=\frac{\nu(f,U_{1},U'_{2})}{m(U_{2},U'_{2},\E_{U_{2}})}\sum_{b_{1}\in \pi_{0}(\ker f')}Y^{b_{1}}_{a}.$$
Therefore, 
$$Y_{0}=\frac{\nu(f^{*},U_{2}, U'_{1})}{m(U_{1},U'_{1},\E_{U_{1}})}\sum_{b_{2}\in \pi_{0}(\ker f'^{*})}X_{b_{2}},$$
$$X_{0}=\frac{\nu(f,U_{1}, U'_{2})}{m(U_{2},U'_{2},\E_{U_{2}})}\sum_{b_{1}\in \pi_{0}(\ker f')}Y_{b_{1}}.$$
By using an action of $G$ and $\pi_{0}(\ker f)=\pi_{0}(\ker f')$, $\pi_{0}(\ker f^{*})=\pi_{0}(\ker f'^{*})$ we get the following relations
$$Y_{b_{1}}=\frac{1}{m(U_{2},U_{1},\tau^{*}(\L))}\sum_{b_{2}\in \pi_{0}(\ker f^{*})}\psi(-B(b_{1},b_{2}))X_{b_{2}},$$
$$X_{b_{2}}=\frac{1}{m(U_{1},U_{2},\L)}\sum_{b_{1}\in \pi_{0}(\ker f)}\psi(B(b_{1},b_{2}))Y_{b_{1}}$$
between the two bases of  $H^{2d}(U_1\times U_2,\L_{\psi^{\vee}}(d))$. Taking the dual bases and using Poincare duality completes the proof of the theorem.
\end{proof}
Let us finally consider the general situation considered in the introduction. Let $U_{1}$ and $U_{2}$ be two connected commutative perfect unipotent algebraic group scheme over $k$.
Let $E$ be a biextension of $U_{1} \times U_{2}$ by a $\Qp/\Zp$ and $f:U_{1}\rightarrow U_{2}^{*}, \ f^{*}:U_{2}\rightarrow U_{1}^{*}$ be the corresponding homomorphisms, where we no longer assume them to be isogenies. 
Let $K_{1},\ K_{2}$ denote the reduced scheme associated with kernels of $f$ and $f^{*}$ respectively.
The morphisms $f$ and $f^{*}$ factor as in the commutative diagrams below:
\begin{center}
\begin{tikzcd}
&U_{1}\arrow[r, "f"]\arrow[d, "\pi_{1}", swap]
&Im(f)\subseteq U_{2}^{*}\\
&U_{1}/K_{1}^{\circ}\arrow[ur, "f' ", swap]
\end{tikzcd}
\begin{tikzcd}
&U_{2}\arrow[r, "f^{*}"]\arrow[d, "\pi_{2}", swap]
&Im(f^{*})\subseteq U_{1}^{*}\\
&U_{2}/K_{2}^{\circ}.\arrow[ur, swap, "f'^{*} "]
\end{tikzcd}
\end{center}
By applying Serre duality we get that,
\begin{center}
\begin{tikzcd}
&Im(f)^{*}\arrow[r, "f^{*}"]\arrow[d, swap, "f'^{*} "]
&U_{1}^{*}\\
&(U_{1}/K_{1}^{\circ})^{*}\arrow[ur]
\end{tikzcd}
\begin{tikzcd}
&Im(f^{*})^{*}\arrow[r, "f"]\arrow[d, swap,"f' "]
&U_{2}^{*}\\
&(U_{2}/K_{2}^{\circ})^{*}\arrow[ur]
\end{tikzcd}
\end{center}

From this one can easily deduce that, $(U_{2}/K_{2}^{\circ})\cong \mbox{Im}(f)^{*}$ and $(U_{1}/K_{1}^{\circ})\cong \mbox{Im}(f^{*})^{*}$ and $f',f'^{*}$ are isogenies.
Therefore the biextension $E$ of $U_{1}\times U_{2}$ by $\Qp/\Zp$  descends to a biextension $E'$ of $U_{1}/K_{1}^{\circ}\times U_{2}/K_{2}^{\circ}$ by $\Qp/\Zp$ through the canonical projection morphisms and the corresponding morphisms $f'$ and $f'^{*}$.
Therefore we have a following cartesian square:
\begin{center}
\begin{tikzcd}
&E\arrow[r]\arrow[d]
&E'\arrow[d]\\
&U_{1}\times U_{2}\arrow[r]
&U_{1}/K_{1}^{\circ}\times U_{2}/K_{2}^{\circ}
\end{tikzcd}
\end{center}
\begin{remark}\label{rk:constants}
Let $(U_{1},U'_{1},\E_{U_{1}})$ and $(U_{2},U'_{2},\E_{U_{2}})$ be two dual pairs giving us identifications ${U_{i,\perf}}^*\cong U'_{i,\perf}$.  
Let $\L$ be a bimultiplicative $\Qlcl$-local system on  $U_{1}\times U_{2}$ and the corresponding  homomorphisms are $f:U_{1,\perf}\rightarrow U'_{2,\perf},\ f^{*}:U_{2,\perf}\rightarrow U'_{1,\perf}$. We define the constants $m(U_{1},U_{2},\L)$ and $m(U_{2},U_{1},\tau^{*}(\L))$ as follows:
$$m(U_{1},U_{2},\L)= m(U_{1}/K_{1}^\circ,U_{2}/K_{2}^\circ,\L)\ \ \ \ \ m(U_{2},U_{1},\tau^{*}(\L))=m(U_{2}/K_{2}^\circ,U_{1}/K_{1}^\circ,\tau^{*}(\L)$$
where $K_{1}$ and $K_{2}$ denote the reduced schemes associated with $\ker f$ and $\ker f^{*}$ respectively. As before, let $d=\dim (U_1/K_1^\circ)=\dim(U_2/K_2^\circ)$. Note that by (\ref{eq:const}) and Lemma \ref{isogeny} it follows that 
$$m(U_{1},U_{2},\L)=(-1)^d\cdot p^r \mbox{ and } m(U_{2},U_{1},\tau^{*}(\L))=(-1)^d\cdot p^{r'} \mbox{ for some }r,r'\in\mathbb{Z}$$ 
with $p^{r+r'}=|\pi_{0}(\ker f)|=|\pi_{0}(\ker f^{*})|$. (See also Remark \ref{rk:constsrr'}.)
\end{remark}

\noindent Now we complete the proof of Theorem \ref{MAIN RESULT}.
\begin{proof}
[\bf{Proof of Theorem \ref{MAIN RESULT}}]\label{proof of 1.1}
In the above discussion if we replace $E$ by connected component of $E$ and $E'$ by its image then we have a following cartesian square:
\begin{center}
\begin{tikzcd}
&E\arrow[r]\arrow[d]
&E'\arrow[d]\\
&U_{1}\times U_{2}\arrow[r,"\pi_{1}\times\pi_{2}"]
&U_{1}/K_{1}^{\circ}\times U_{2}/K_{2}^{\circ}
\end{tikzcd}
\end{center}
Also, the biextension $E'$ of $U_{1}/K_{1}^{\circ}\times U_{2}/K_{2}^{\circ}$ by a finite commutative group $A\ (\leq \Qp/\Zp)$ such that $\L$ is trival on it and bimultiplicative $\Qlcl$-local system $\L$ corresponds to a character $\psi:A\hookrightarrow \Qlcl^{\times}$.
Then by Lemma \ref{isogeny}, we have following relations in $H^{2d}(U_{1}/K_{1}^{0}\times U_{2}/K_{2}^{0},\L_{\psi^{\vee}}(d))$:
$$Y'_{b_{1}}=\frac{1}{m(U_{2}/K_{2}^\circ,U_{1}/K_{1}^\circ,\tau^{*}(\L))}\sum_{b_{2}\in \pi_{0}(\ker f'^{*})}\psi(-B(b_{1},b_{2}))X'_{b_{2}},$$
$$X'_{b_{2}}=\frac{1}{m(U_{1}/K_{1}^\circ,U_{2}/K_{2}^\circ,\L)}\sum_{b_{1}\in \pi_{0}(\ker f')}\psi(B(b_{1},b_{2}))Y'_{b_{1}}.$$
Now by applying Lemma \ref{Pullback} we get the following relations in $H^{2d}(U_{1}\times U_{2},\L_{\psi^{\vee}}(d))$:
$$Y_{b_{1}}=\frac{1}{m(U_{2}/K_{2}^\circ,U_{1}/K_{1}^\circ,\tau^{*}(\L)}\sum_{b_{2}\in \pi_{0}(\ker f^{*})}\psi(-B(b_{1},b_{2}))X_{b_{2}},$$
$$X_{b_{2}}=\frac{1}{m(U_{1}/K_{1}^\circ,U_{2}/K_{2}^\circ,\L)}\sum_{b_{1}\in \pi_{0}(\ker f)}\psi(B(b_{1},b_{2}))Y_{b_{1}}.$$
\emph{i.e.},
$$Y_{b_{1}}=\frac{1}{m(U_{2},U_{1},\tau^{*}(\L)}\sum_{b_{2}\in \pi_{0}(\ker f^{*})}\psi(-B(b_{1},b_{2}))X_{b_{2}},$$
$$X_{b_{2}}=\frac{1}{m(U_{1},U_{2},\L)}\sum_{b_{1}\in \pi_{0}(\ker f)}\psi(B(b_{1},b_{2}))Y_{b_{1}}.$$
Taking the dual bases and using Poincare duality completes the proof of the theorem.
\end{proof}

\begin{proof}[\bf{Proof of Corollary \ref{nondegenerate}}]
Suppose, $b_{1}\in \pi_{0}(\ker f)$ such that $B(b_{1},b)=0\ \forall\ b\in \pi_{0}(\ker f^{*})$. By Theorem \ref{MAIN RESULT}, we have 
$Y_{b_{1}}=Y_{0}$
which implies that $b_{1}=0$.
Similarly one can show that if $b_{2}\in \pi_{0}(\ker f^{*})$ such that $B(b,b_{2})=0\ \forall\ b\in \pi_{0}(\ker f)$ then $b_{2}=0$.
This proves the corollary.
\end{proof}

\begin{corollary}\label{generic character}
Let $G$  be the group defined in Section \ref{sec:biadditive} and $\psi:A\xhookrightarrow{} \Qlcl ^{\times}$ the fixed injective character of $A\ (\cong Z(\pi_{0}(G))$. Then there exists an unique irreducible representation (upto isomorphism) of $\pi_{0}(G)$  whose central character is $\psi$. The dimension of this irreducible representation is $|\pi_0(K_1)|=|\pi_0(K_2)|$.
\end{corollary}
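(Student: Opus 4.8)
The plan is to identify this statement with the Stone--von Neumann theorem for the finite Heisenberg group $H:=\pi_0(G)$ and to deduce it from the non-degeneracy of $B$ (Corollary \ref{nondegenerate}). First I would record the group structure: as a set $H=\pi_0(K_1)\times\pi_0(K_2)\times A$, with $(b_1,b_2,a)(b_1',b_2',a')=(b_1+b_1',\,b_2+b_2',\,a+a'+B(b_1,b_2'))$. Every commutator lies in $\{0\}\times\{0\}\times A$, so $H$ is two-step nilpotent with $[H,H]\subseteq A$; moreover $(b_1,b_2,a)$ is central iff $B(b_1,b_2')=B(b_1',b_2)$ for all $b_1',b_2'$, which by non-degeneracy of $B$ forces $b_1=b_2=0$. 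Hence $Z(H)=A$, as asserted, and the commutator map descends to a well-defined pairing on $H/A=\pi_0(K_1)\times\pi_0(K_2)$, namely $\big((b_1,b_2),(b_1',b_2')\big)\mapsto\psi\big(B(b_1,b_2')\big)\,\psi\big(B(b_1',b_2)\big)^{-1}$. Since $\psi$ is injective on $A$ and $B$ is non-degenerate, this $\Qlcl^\times$-valued pairing is non-degenerate.

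Next I would produce an irreducible with central character $\psi$ by inducing from a Lagrangian. Take $M:=\pi_0(K_1)\times\{0\}\times A\le H$; one checks that $M$ is abelian, contains $Z(H)=A$, is normal in $H$ (conjugation only shifts the $A$-coordinate), and has index $|\pi_0(K_2)|$. Pick any extension $\widetilde\psi$ of $\psi$ to a character of the abelian group $M$ and set $\rho:=\Ind_M^{H}\widetilde\psi$, so that $\dim\rho=|\pi_0(K_2)|$ and $\rho$ has central character $\widetilde\psi|_A=\psi$. For irreducibility I would invoke Mackey's criterion: since $M\trianglelefteq H$, the representation $\rho$ is irreducible iff ${}^{g}\widetilde\psi\neq\widetilde\psi$ for every $g\in H\setminus M$. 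Writing $g=(c_1,c_2,c)$, a short computation gives ${}^{g}\widetilde\psi(b_1,0,a)=\widetilde\psi(b_1,0,a)\cdot\psi(B(b_1,c_2))$, and this twist is nontrivial precisely when $c_2\neq0$, i.e. precisely when $g\notin M$ --- here again using injectivity of $\psi$ on $A$ and non-degeneracy of $B$.

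For uniqueness and the dimension statement I would use the central idempotent $e_\psi:=\tfrac1{|A|}\sum_{a\in A}\psi(a)^{-1}[a]\in\Qlcl[H]$, which is central because $A\le Z(H)$. Viewing $\Qlcl[H]$ as a free $\Qlcl[A]$-module on a set of coset representatives of $A\backslash H$, the two-sided ideal $e_\psi\Qlcl[H]$ has $\Qlcl$-dimension $[H:A]=|\pi_0(K_1)|\cdot|\pi_0(K_2)|$; on the other hand, under the Wedderburn decomposition it equals $\prod_\sigma\End(\sigma)$ over the irreducibles $\sigma$ of $H$ with central character $\psi$, so $\sum_\sigma(\dim\sigma)^2=|\pi_0(K_1)|\cdot|\pi_0(K_2)|$. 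By Consequence \ref{dim eq} we have $|\pi_0(K_1)|=|\pi_0(K_2)|=:n$, so this sum is $n^2$; since $\rho$ already contributes $(\dim\rho)^2=n^2$, it is the unique irreducible of $\pi_0(G)$ with central character $\psi$, and $\dim\rho=n=|\pi_0(K_1)|=|\pi_0(K_2)|$.

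This is standard finite Heisenberg representation theory once the group law is unwound, so I do not expect a genuine obstacle; the one point that needs care is the non-degeneracy of the commutator pairing on $H/A$ (equivalently, the non-vanishing input in the Mackey step), which is exactly where Corollary \ref{nondegenerate} together with the injectivity of $\psi$ on $A$ is used. No geometric input beyond what is already established is needed.
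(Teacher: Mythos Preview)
Your proof is correct and rests on the same key input as the paper's: the non-degeneracy of the commutator pairing on $\pi_0(G)/A$, which you (and the paper) deduce from Corollary \ref{nondegenerate} together with the injectivity of $\psi$ on $A$. The paper's proof stops there and simply invokes the Stone--von Neumann theorem (citing \cite{Bump}, Ex.~4.1.8) as a black box, whereas you give a self-contained proof of that theorem in this finite setting: you exhibit the irreducible as $\Ind_M^H\widetilde\psi$ from the Lagrangian $M=\pi_0(K_1)\times\{0\}\times A$, verify irreducibility by Mackey, and then obtain uniqueness and the dimension by the Wedderburn count $\dim e_\psi\Qlcl[H]=[H:A]=n^2$. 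Your route is more elementary and more explicit (no external citation needed), at the cost of a few extra lines; the paper's is shorter but relies on the reader knowing the standard Heisenberg representation theory. Either way the substantive step is the same non-degeneracy check.
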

 \begin{proof} 
 Let us  define a bilinear pairing on $\pi_{0}(G)/A\ (\cong\pi_{0}(K_{1})\times\pi_{0}(K_{2}))$ using $\psi$ as follows: for $\bar{x},\bar{y}\in \pi_{0}(G)/A$,   $\langle \bar{x},\bar{y}\rangle=\psi(xyx^{-1}y^{-1})$. We will prove that $\psi$ is generic, i.e. the pairing $\langle\ ,\ \rangle$ is non-degerate.
 
 Let $\bar{x}=(k_1,k_2)\in \pi_{0}(G)/A=\pi_{0}(K_{1})\times\pi_{0}(K_{2})$ be any non-zero element. Take  $x=(k_{1},k_{2},0)$ and let $y=(b_{1},b_{2},0)$. Then 
 $\langle\bar{x},\bar{y}\rangle=\psi(xyx^{-1}y^{-1})=\psi(B(k_{1},b_{2})-B(b_{1},k_{2}))$.
As $\bar{x}\neq 0$,  without loss of generality assume that $k_{1}\neq 0$. Since  $B$ is a non-degenerate pairing, there exists $b_{2}\in \pi_{0}(K_{2})$ such that $B(k_{1},b_{2})\neq 0$.
 If  $y=(0, b_{2},0)$ then $\langle\bar{x},\bar{y}\rangle=\psi(B(k_{1},b_{2}))\neq 1$.
 This proves that $\psi$ is a generic character of $A$.
 Then the corollary follows from \emph{Stone-Von Neumann} theorem (see \cite{Bump} Ex. 4.1.8). 
\end{proof}

\begin{proof}[\bf{Proof of Corollary \ref{irreducible}}]
Note that action of $G^{\circ}$ on $H_{c}^{2D}(U_{1}\times U_{2}, \mathcal{L}(D))$ is trivial, so the action of $G$ factors through $\pi_{0}(G)$. 
It is easy to see that centre of $\pi_{0}(G)\ (\cong A)$ acts on $H_{c}^{2D}(U_{1}\times U_{2},\mathcal{L}_{\psi}(D))$ by the character $\psi$ and $\pi_{0}(G)$ is two step nilpotent group.
By  \ref{dim eq}, $\dim(H_{c}^{2D}(U_{1}\times U_{2}, \mathcal{L}(D)))$ is $|\pi_{0}(K_{1})|$.
The result now follows from Corollary \ref{generic character}.
\end{proof}

\begin{definition}{(Symmetric biextension.)} 
Let $U\in \cpu$, the biextension $E$ of $U\times U$ is said to be symmetric  if $f=f^{*}$, where $f\ \mbox{and}\ f^{*}$ are  corresponding homomorphisms.
Equivalently, $E$ is symmetric if $\tau^{*}(E)\cong E$. 
We say that a homomorphism $f:U\rightarrow U^{*}$ is symmetric  if the corresponding biextension is symmetric.
\end{definition}
\begin{definition}{(Skew-symmetric biextension.)}
Let $U\in \cpu$, the biextension $E$ of $U\times U$ is said to be skew-symmetric if the restriction of $E$ to the diagonal of $U\times U$ is a trivial $\Qp/\Zp$-torsor. If $E$ is skew-symmetric biextension of $U\times U$  then $f=-f^{*}$, where $f$ and $f^{*}$ are corresponding homomorphisms.
The converse statement holds if $char(k)> 2$.
We say that a homomorphism $f:U\rightarrow U^{*}$ is skew-symmetric  if the corresponding biextension is skew-symmetric.
\end{definition}
\begin{corollary}\label{symmetric} Let  $U$ be a  connected commutative unipotent group over k of dimension $d$.
Let $\mathcal{L}$ be a bimultiplicative $\Qlcl$-local system on $U\times U$ such that the corresponding homomorphism $f:U_{\perf}\rightarrow U_{\perf}^{*}$ is symmetric (resp. skew-symmetric) and let $\dim(\ker f)=\kappa$.  Then the cardinality of $\pi_{0}(\ker f)$ must be $p^{2r}$ for some $r\in\mathbb{Z}$. The relations between two bases of $H_{c}^{2D}(U\times U, \mathcal{L}(D))$ is then given by,
 $$Y^{*}_{b_{1}}=\frac{(-1)^{d-\kappa}}{p^{r}}\sum_{b_{2}\in \pi_{0}(\ker f)}\psi(B(b_{1},b_{2}))X^{*}_{b_{2}}$$
$$X^{*}_{b_{2}}=\frac{(-1)^{d-\kappa}}{p^{r}}\sum_{b_{1}\in \pi_{0}(\ker f)}\psi(-B(b_{1},b_{2}))Y^{*}_{b_{1}}$$
and  where $\psi$ is the fixed character of $\Qp/\Zp$.
\end{corollary}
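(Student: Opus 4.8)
The plan is to deduce Corollary~\ref{symmetric} directly from Theorem~\ref{MAIN RESULT} (equivalently, from Lemma~\ref{isogeny} together with Remark~\ref{rk:constants}) by showing that in the symmetric and skew-symmetric cases the two scalar constants $m(U,U,\mathcal{L})$ and $m(U,U,\tau^{*}(\mathcal{L}))$ coincide. Granting this, write $m(U,U,\mathcal{L})=(-1)^{d-\kappa}p^{r}$ and $m(U,U,\tau^{*}(\mathcal{L}))=(-1)^{d-\kappa}p^{r'}$ as in Remark~\ref{rk:constants} (the ``$d$'' there being $\dim(U/K^{\circ})=d-\kappa$); the relation $p^{r+r'}=|\pi_{0}(\ker f)|=|\pi_{0}(\ker f^{*})|$ then forces $r=r'$, whence $|\pi_{0}(\ker f)|=p^{2r}$, which is the parity claim. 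Substituting $r'=r$ into the two formulas of Theorem~\ref{MAIN RESULT}, and using that $\ker f=\ker f^{*}$ so that the index sets $\pi_{0}(\ker f)$ and $\pi_{0}(\ker f^{*})$ agree and $K_{1}=K_{2}$ is the reduced kernel of $f$, yields precisely the two displayed identities of the corollary.

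I would first record why $\ker f=\ker f^{*}$ in both cases: in the symmetric case $f=f^{*}$ by definition, and in the skew-symmetric case $f^{*}=-f=(-\operatorname{id})\circ f$, where $-\operatorname{id}$ is an automorphism of $U_{\perf}^{*}$, so $\ker f^{*}=\ker f$. In particular $K_{1}=K_{2}$ (call it $K$) and the biadditive pairing of Theorem~\ref{MAIN RESULT} is a pairing $B\colon\pi_{0}(K)\times\pi_{0}(K)\to\Qp/\Zp$, as in the statement of the corollary.

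The key step is the equality $m(U,U,\mathcal{L})=m(U,U,\tau^{*}(\mathcal{L}))$. Fixing a single dual pair $(U,U',\mathcal{E}_{U})$ for both factors, the defining formulas for these constants (the displayed equation preceding Lemma~\ref{isogeny}, extended via Remark~\ref{rk:constants}) give $m(U,U,\mathcal{L})=m(U,U',\mathcal{E}_{U})/\nu(f,U,U')$ and $m(U,U,\tau^{*}(\mathcal{L}))=m(U,U',\mathcal{E}_{U})/\nu(f^{*},U,U')$, so it suffices to prove $\nu(f,U,U')=\nu(f^{*},U,U')$. After reducing to the case where $f$ (hence $f^{*}$) is an isogeny by passing to $U/K^{\circ}$ exactly as in the proof of Theorem~\ref{MAIN RESULT}, this amounts to checking that $f'$ and $f'^{*}$ have the same geometric multiplicity. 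In the symmetric case $f'=f'^{*}$ and there is nothing to prove. In the skew-symmetric case $f'^{*}=(-\operatorname{id})\circ f'$; since $-\operatorname{id}$ is an algebraic automorphism (no Frobenius twist enters), $\ker\bigl((-\operatorname{id})\circ f'\bigr)=\ker f'$ as a subscheme, and this persists through the normalization of Remark~\ref{def:constant}: writing $f=\Phi_{U'/k}^{-N}\circ f'_{\perf}$ one has $(-\operatorname{id})\circ f=\Phi_{U'/k}^{-N}\circ\bigl((-\operatorname{id})\circ f'\bigr)_{\perf}$ with $\ker\bigl((-\operatorname{id})\circ f'\bigr)=\ker f'$, so the length of the local ring at the identity is unchanged. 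Hence $\nu(f)=\nu(f^{*})$ in both cases.

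I expect the only genuine subtlety — and the step I would be most careful about — is precisely the skew-symmetric case of $\nu(f)=\nu(f^{*})$, where one must verify that the negation automorphism interacts correctly with the Frobenius-twist bookkeeping built into the definition of $\nu$ in the perfect setting (Remark~\ref{def:constant}); once this is in hand, everything else is a direct substitution into Theorem~\ref{MAIN RESULT}. To conclude: $m(U,U,\mathcal{L})=m(U,U,\tau^{*}(\mathcal{L}))$ gives $(-1)^{d-\kappa}p^{r}=(-1)^{d-\kappa}p^{r'}$, so $r=r'$; combined with $p^{r+r'}=|\pi_{0}(\ker f)|$ this yields $|\pi_{0}(\ker f)|=p^{2r}$, and setting $r'=r$ and $\pi_{0}(\ker f^{*})=\pi_{0}(\ker f)$ in the two formulas of Theorem~\ref{MAIN RESULT} produces the asserted change-of-basis relations for $H^{2D}_{c}(U\times U,\mathcal{L}(D))$.
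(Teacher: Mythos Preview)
Your proposal is correct and follows essentially the same route as the paper: reduce to the equality $m(U,U,\mathcal{L})=m(U,U,\tau^{*}(\mathcal{L}))$ and then invoke Theorem~\ref{MAIN RESULT}. The paper's own proof is extremely terse---in the symmetric case it simply notes $\tau^{*}(\mathcal{L})\cong\mathcal{L}$ (so the two constants are literally equal), and for the skew-symmetric case it says only ``similarly''. Your argument via $\nu(f,U,U')=\nu(f^{*},U,U')$, using that $-\operatorname{id}$ is an automorphism leaving the kernel scheme unchanged, is a legitimate and more explicit way to supply that ``similarly'', and is in fact the natural thing to check given how the constants are defined.
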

\begin{proof}
By definition of symmetric biextension $f=f^{*}$ and $\tau^{*}(\L)\cong \L$.
Then $\ker f =\ker f^{*}$ and $m(U,U,\L)=m(U,U,\tau^{*}(\L))$.
Similarly one can show that
 $m(U,U,\L)=m(U,U,\tau^{*}(\L))$ in case of skew-symmetric homomorphism.
 Then the result follows from  the Theorem \ref{MAIN RESULT}.
 \end{proof}

\bibliographystyle{alpha}
\bibliography{references}

\begin{thebibliography}{AGV71}

\bibitem[AGV71]{SGA4}
Michael Artin, Alexander Grothendieck, and Jean-Louis Verdier.
\newblock {\em Theorie de Topos et Cohomologie Etale des Schemas {I}, {II},
  {III}}, volume 269, 270, 305 of {\em Lecture Notes in Mathematics}.
\newblock Springer, 1971.

\bibitem[BD06]{BD}
Mitya Boyarchenko and Vladimir Drinfeld.
\newblock A motivated introduction to character sheaves and the orbit method
  for unipotent groups in positive characteristic, 2006.

\bibitem[BD14]{BD:foundations}
Mitya Boyarchenko and Vladimir Drinfeld.
\newblock Character sheaves on unipotent groups in positive characteristic:
  foundations.
\newblock {\em Selecta Math. (N.S.)}, 20(1):125--235, 2014.

\bibitem[Boy10]{Mitya}
Mitya Boyarchenko.
\newblock Characters of unipotent groups over finite fields.
\newblock {\em Selecta Math. (N.S.)}, 16(4):857--933, 2010.

\bibitem[Bum97]{Bump}
Daniel Bump.
\newblock {\em Automorphic Forms and Representations}.
\newblock Cambridge Studies in Advanced Mathematics. Cambridge University
  Press, 1997.

\bibitem[Dat10]{Datta}
Swarnendu Datta.
\newblock Metric groups attached to skew-symmetric biextensions.
\newblock {\em Transform. Groups}, 15(1):72--91, 2010.

\bibitem[Des10]{Desh}
Tanmay Deshpande.
\newblock Heisenberg idempotents on unipotent groups.
\newblock {\em Math. Res. Lett.}, 17(3):415--434, 2010.

\bibitem[Ful98]{Intersection}
William Fulton.
\newblock {\em Intersection Theory}.
\newblock Springer, 1998.

\bibitem[Gre65]{Greenberg}
Marvin~J. Greenberg.
\newblock Perfect closures of rings and schemes.
\newblock {\em Proc. Amer. Math. Soc.}, 16:313--317, 1965.

\bibitem[Mil80]{Milne}
J.S. Milne.
\newblock {\em Etale Cohomology(PMS-33)}.
\newblock Princeton University Press, 1980.

\bibitem[Sai96]{Saibi}
Moussa Saibi.
\newblock Transformation de {F}ourier-{D}eligne sur les groupes unipotents.
\newblock {\em Ann. Inst. Fourier (Grenoble)}, 46(5):1205--1242, 1996.

\end{thebibliography}
\end{document}